\newtheorem{theorem}{Theorem}[section]
\newtheorem{lemma}[theorem]{Lemma}
\newtheorem{corollary}[theorem]{Corollary}
\newtheorem{proposition}[theorem]{Proposition}
\newtheorem{Prop}[theorem]{Proposition}
\newtheorem{prop}[theorem]{Proposition}
\theoremstyle{definition}
\newtheorem{definition}[theorem]{Definition}
\newtheorem{example}[theorem]{Example}
\theoremstyle{definition}
\newtheorem{question}[theorem]{Question}
\theoremstyle{remark}
\newtheorem{Rmk}[theorem]{Remark}
\newcommand{\Ga}{\Gamma}
\newcommand{\ga}{\gamma}
\newcommand{\La}{\Lambda}
\newcommand{\cal}{\mathcal}
\newcommand{\al}{\alpha}
\newcommand{\be}{\beta}
\title{Quasi-Isometric embeddings of Ramanujan complexes}
\author{Hyein Choi}
\address{Department of Mathematics, Rice University, Houston, TX, USA}
\email{hc71@rice.edu}
\begin{document}

\begin{abstract}
Ramanujan complexes were defined as high dimensional analogs of the optimal expanders, Ramanujan graphs. They were constructed as quotients of the Euclidean building (also called the affine building and the Bruhat-Tits building) of $\mathrm{PGL}_d(\mathbb{F}_p((y)))$ by certain cocompact lattices 
by Lubotzky-Samuels-Vishne. We distinguish the Ramanujan complexes up to large-scale geometry. More precisely, we show that if $p$ and $q$ are distinct primes, then the associated Ramanujan complexes do not quasi-isometrically embed into one another. The main tools are the box space rigidity of Khukhro-Valette and the Euclidean building rigidity of Kleiner-Leeb and Fisher-Whyte.
\end{abstract}

\maketitle

\tableofcontents

\section{Introduction}

\subsection{Main result}

Let $d \geq 3$, $p$ be a prime, and $G=\mathrm{PGL}_d(\mathbb{F}_p((y)))$, where $\mathbb{F}_p((y))$ is the local field of Laurent series over the finite field $\mathbb{F}_p$. 
Let $\cal{B}_p$ be the Euclidean building of $G$ which is also called the \textit{affine building} and the \textit{Bruhat-Tits building} considered as an analogue of symmetric spaces.
Denote by $\mathscr{X}_p= \{ \Gamma(I_i)\backslash \mathcal{B}_p \}_i$ an infinite family of Ramanujan complexes constructed as in \cite{lubotzky2005explicit}, 
where $\Ga$ is a cocompact lattice of $G$ and $\Ga(I_i)$ is a finite index normal subgroup of $\Ga$ for each $i$ satisfying certain conditions.
The background on Ramanujan complexes is discussed in
Section \ref{intronra} and we will briefly review the construction of such $\Ga$ and $\Ga(I_i)$ in Section \ref{ra}.

Our main result is to distinguish families of Ramanujan complexes of equal rank associated with distinct primes up to quasi-isometric embeddings.

\begin{theorem} [Main result] \label{3}
    Let $d \geq 4$ and $\mathscr{X}_p= \{ \Gamma(I_i)\backslash \mathcal{B}_p \}_i$ be an infinite family of Ramanujan complexes built from the Euclidean building $\cal{B}_p$ of $\mathrm{PGL}_d(\mathbb{F}_p((y)))$ as above. For distinct primes $p$ and $q$, $\mathscr{X}_p$ does not quasi-isometrically embed in $\mathscr{X}_q$.
 \end{theorem}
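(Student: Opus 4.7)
The plan is to cascade three rigidity results: first identify the family $\mathscr{X}_p$ with a box space of the lattice $\Gamma$, then apply the Khukhro--Valette box space rigidity to extract a quasi-isometric embedding between lattices, and finally invoke the Kleiner--Leeb / Fisher--Whyte building rigidity to force a contradiction from $p \neq q$.

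First I would reinterpret $\mathscr{X}_p$ as (uniformly coarsely equivalent to) a box space. Since $\Gamma$ is a cocompact lattice in $G$ acting properly on $\mathcal{B}_p$, the \v{S}varc--Milnor lemma gives a quasi-isometry between $\Gamma$ (with any word metric) and $\mathcal{B}_p$. Dividing by each finite-index normal subgroup $\Gamma(I_i) \trianglelefteq \Gamma$, one obtains quasi-isometries between $\Gamma(I_i)\backslash\mathcal{B}_p$ and the finite quotient $\Gamma/\Gamma(I_i)$ with constants \emph{uniform in $i$}, because a fundamental domain for $\Gamma$ on $\mathcal{B}_p$ does not change. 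Consequently $\mathscr{X}_p$ is coarsely equivalent, as a disjoint family of finite metric spaces, to the box space of $\Gamma_p := \Gamma$ along the filtration $(\Gamma(I_i))$, and similarly for $\mathscr{X}_q$.

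Next, assuming toward contradiction a quasi-isometric embedding $\mathscr{X}_p \hookrightarrow \mathscr{X}_q$, the previous step converts this into a coarse embedding between the corresponding box spaces of $\Gamma_p$ and $\Gamma_q$. The Khukhro--Valette theorem then promotes this to a quasi-isometric embedding of the lattices themselves, $\Gamma_p \hookrightarrow \Gamma_q$, which in turn, by \v{S}varc--Milnor applied to each side, yields a quasi-isometric embedding of the Euclidean buildings $\mathcal{B}_p \hookrightarrow \mathcal{B}_q$.

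Finally, since $d \geq 4$, both buildings are thick irreducible Euclidean buildings of type $\widetilde{A}_{d-1}$ of rank $d-1 \geq 3$. The Kleiner--Leeb rigidity for quasi-isometries, combined with the Fisher--Whyte extension to quasi-isometric \emph{embeddings} in higher rank, forces any such embedding to come, after bounded perturbation and a linear rescaling of apartments, from an algebraic morphism of the underlying groups $\mathrm{PGL}_d(\mathbb{F}_p((y)))$ and $\mathrm{PGL}_d(\mathbb{F}_q((y)))$. No such morphism exists when the residue characteristics differ, which is the desired contradiction. I expect the main technical obstacle to be the box space step: verifying the hypotheses of Khukhro--Valette for the specific congruence-type filtration $(\Gamma(I_i))$ used by Lubotzky--Samuels--Vishne rather than for an arbitrary residually finite chain, and, complementarily on the building side, making the Fisher--Whyte rigidity explicit enough in positive characteristic to rule out QI embeddings across local fields of distinct residue characteristic.
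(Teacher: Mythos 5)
Your first two steps are essentially the paper's argument: uniform quasi-isometries $\Gamma(I_i)\backslash\mathcal{B}_p \simeq \Gamma/\Gamma(I_i)$ (the paper gets the uniformity via Lemma \ref{skeleton} and Milnor--\v{S}varc), then Khukhro--Valette box space rigidity (Proposition \ref{KV}, extended to embeddings as in Remark \ref{KV2}) to produce a quasi-isometric embedding $\Gamma_p \to \Gamma_q$ and hence $\mathcal{B}_p \to \mathcal{B}_q$; this is exactly Proposition \ref{2}. One caution there: do not downgrade to a \emph{coarse} embedding of box spaces. If you only carry a coarse embedding through Khukhro--Valette you end up with a coarse embedding $\mathcal{B}_p \to \mathcal{B}_q$, which is not enough input for the Fisher--Whyte embedding rigidity; you must keep uniform $(L,C)$ control throughout, which the hypotheses do give you.

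The genuine gap is in your final step. Neither Kleiner--Leeb nor Fisher--Whyte says that a quasi-isometric \emph{embedding} $\mathcal{B}_p \to \mathcal{B}_q$ comes from an algebraic morphism of the groups $\mathrm{PGL}_d(\mathbb{F}_p((y)))$ and $\mathrm{PGL}_d(\mathbb{F}_q((y)))$. What Fisher--Whyte gives (for equal rank, and only after excluding certain Weyl pattern types, which is why Proposition \ref{1} carries a list of excluded type pairs) is that $\partial_T\phi(\mathcal{B}_p)$ is a top-dimensional spherical \emph{subbuilding} of $\partial_T\mathcal{B}_q$; this subbuilding corresponds to a Euclidean subbuilding $Y'$ of $\mathcal{B}_q$, and the image may well be a proper subbuilding associated with a proper subfield, so no morphism between the two full groups is ever produced or needed. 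The missing ingredient is precisely the paper's Proposition \ref{field}: using root groups and root group sequences, the field associated with a top-dimensional subbuilding of a split building embeds as a subfield of the ambient field. Combining this with Kleiner--Leeb (the quasi-isometry $\mathcal{B}_p \to Y'$ induces an isomorphism of spherical buildings at infinity) and the classification of higher-rank buildings, one gets that $\mathbb{F}_p((y))$ is isomorphic to a subfield of $\mathbb{F}_q((y))$, which is impossible because the characteristics differ. Your proposal correctly identifies the characteristic obstruction as the endgame, and even flags this step as the technical obstacle, but as stated the appeal to ``an algebraic morphism of the underlying groups'' is not a result you can cite, and without the subbuilding-to-subfield lemma the argument does not close.
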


In particular, our work is parallel to the work of Khukhro-Valette \cite{khukhro2017expanders}. They consider \textit{box spaces}
$\{\mathrm{Cay}(G/N_i)\}_i$ of $G$ that are families of Cayley graphs of $G/N_i$, where $G$ is a finitely generated residually finite group and $\{N_i\}$ is a decreasing sequence of finite index normal subgroups of $G$ with trivial intersection.

\begin{theorem} \label{kvmain} \cite[Theorem C-($1$)]{khukhro2017expanders}
    Let $X_p$ be a box space of $\mathrm{SL}_2(\mathbb{Z}[\sqrt{p}])$.
    For distinct primes $p$ and $q$, $X_p$ is not coarse equivalent\footnote{Coarse equivalence is defined similarly to quasi-isometry whose control functions in (\ref{qi}) need not be affine but diverge} to $X_q$.
\end{theorem}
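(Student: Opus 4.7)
The plan is to reduce a coarse equivalence between box spaces to a coarse equivalence between the ambient groups, then apply rigidity for irreducible non-uniform lattices in higher-rank semisimple Lie groups, and finally use an arithmetic invariant to distinguish the quadratic fields $\mathbb{Q}(\sqrt{p})$ and $\mathbb{Q}(\sqrt{q})$. Write $G_p := \mathrm{SL}_2(\mathbb{Z}[\sqrt{p}])$ and $G_q := \mathrm{SL}_2(\mathbb{Z}[\sqrt{q}])$, and suppose for contradiction that $X_p$ and $X_q$ are coarsely equivalent. The first step is box-space rigidity: since the defining chain $\{N_i\}$ has trivial intersection, the injectivity radii of the Cayley graphs $\mathrm{Cay}(G_p/N_i)$ diverge, so these finite graphs pointed Gromov--Hausdorff approximate $\mathrm{Cay}(G_p)$ on balls of radius tending to infinity. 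A coarse equivalence $f : X_p \to X_q$ has uniform control functions, so once the components of $X_p$ are large enough compared with these functions, $f$ must carry each component into a single component of $X_q$. Restricting $f$ to such components and taking a pointed Gromov--Hausdorff (equivalently, ultralimit) limit produces a coarse equivalence $G_p \to G_q$ of the ambient Cayley graphs --- this descent principle is the box-space rigidity used by Khukhro--Valette.

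Next I would apply rigidity for Hilbert modular groups. Both $G_p$ and $G_q$ are commensurable with the Hilbert modular groups of $\mathbb{Q}(\sqrt{p})$ and $\mathbb{Q}(\sqrt{q})$, hence are irreducible non-uniform lattices in the higher-rank semisimple Lie group $H := \mathrm{SL}_2(\mathbb{R}) \times \mathrm{SL}_2(\mathbb{R})$, acting on the symmetric space $\mathbb{H}^2 \times \mathbb{H}^2$. By the quasi-isometric/coarse rigidity theorem for non-uniform irreducible lattices in higher-rank semisimple Lie groups (Schwartz in rank one, extended to products and higher rank by Eskin and Farb--Schwartz), any coarse equivalence $G_p \to G_q$ lies at bounded distance from the action of an element of $H$ conjugating $G_p$ into a commensurator of $G_q$. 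Consequently $G_p$ and $G_q$ are commensurable inside $H$.

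To conclude, I would invoke an arithmetic invariant: commensurable arithmetic lattices in a semisimple Lie group share their invariant trace field. The invariant trace field of $G_p = \mathrm{SL}_2(\mathbb{Z}[\sqrt{p}])$ is the real quadratic field $\mathbb{Q}(\sqrt{p})$, and distinct primes $p \neq q$ give non-isomorphic fields $\mathbb{Q}(\sqrt{p}) \neq \mathbb{Q}(\sqrt{q})$ (their discriminants are distinct). This contradicts the commensurability obtained in the previous step and proves the theorem.

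The hardest step is the first one: one must ensure that the control functions of the box-space coarse equivalence survive the Gromov--Hausdorff limit uniformly in the index $i$, so that the map between infinite Cayley graphs is genuinely a coarse equivalence rather than a degenerate or bounded map. Verifying that the image components in $X_q$ also grow appropriately (so that the inverse direction of the coarse equivalence survives passage to the limit) is the subtle point. Once this descent is in hand, the higher-rank rigidity theorems and the distinct-discriminant argument are standard.
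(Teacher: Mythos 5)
Your proposal follows essentially the same route as the Khukhro--Valette argument that this paper only cites for Theorem~\ref{kvmain} (and describes in its proof-strategy section): descend the coarse equivalence of box spaces to a coarse equivalence of the groups via box-space rigidity, then apply Farb--Schwartz quasi-isometric rigidity of Hilbert modular groups (irreducible nonuniform lattices in $\mathrm{SL}_2(\mathbb{R})\times\mathrm{SL}_2(\mathbb{R})$) to force commensurability, which is impossible since $\mathbb{Q}(\sqrt{p})\not\cong\mathbb{Q}(\sqrt{q})$. This is the intended proof, so no further comment is needed.
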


Theorem \ref{kvmain} can be viewed as a $1$-dimensional analogue of our result since the objects are built from $\rm{SL}_2$ and $\rm{PGL}_d$ for $d \geq 4$, respectively.
Indeed, Theorem \ref{kvmain} will produce expander graphs and Theorem \ref{3} will produce high dimensional expanders in the following sections Corollary \ref{kvmain2} and Corollary \ref{coro}. The main difference is that we distinguish Ramanujan complexes up to quasi-isometric embeddings rather than coarse equivalence. 
While neither notion is stronger than the other, they differ in nature: coarse equivalence is an equivalence relation, whereas quasi-isometric embedding is not.
Thus our distinction requires ruling out quasi-isometric embeddings in both directions rather than excluding a single equivalence.

Corollary \ref{sym vs euc} is a special case of a key ingredient to prove Theorem \ref{3}. It is not used in proving the main theorem but it may be of independent interest as an answer to a natural question about how symmetric spaces differ from their $p$-adic analogue Euclidean buildings.

\begin{corollary} [to Proposition \ref{1}] \label{sym vs euc}
    Let $n \geq 4$ and $p$ be a prime.
    Let $X$ be the symmetric space of $\mathrm{SL}_n(\mathbb{R})$ and $Y_p$ be the Euclidean building of $\mathrm{SL}_n(\mathbb{Q}_p)$.
    \begin{enumerate}
        \item  There is no quasi-isometric embedding $X \to Y_p$ nor $Y_p \to X$.
        \item There is no quasi-isometric embedding
        $Y_p \to Y_q$ for distinct primes $p$ and $q$.
    \end{enumerate}
\end{corollary}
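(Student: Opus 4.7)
The strategy is to obtain the corollary by direct appeal to Proposition \ref{1}. Based on its place in the paper as a rigidity statement in the spirit of Kleiner--Leeb and Fisher--Whyte, I expect Proposition \ref{1} to assert that any quasi-isometric embedding between a higher-rank symmetric space of noncompact type and/or a Euclidean building is induced, on an appropriate asymptotic or boundary-level invariant, by a map that respects the underlying algebraic data: the Weyl group, whether the factor is symmetric or polyhedral, and the defining field. The corollary would then follow by checking that $X$, $Y_p$, and $Y_q$ differ in one of these data.

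For (1), both $X$ and $Y_p$ have Weyl type $A_{n-1}$ with rank $n-1 \ge 3$, but the factor types differ fundamentally: $X$ is a Riemannian symmetric space arising from the Archimedean field $\R$, while $Y_p$ is a polyhedral Euclidean building arising from the non-Archimedean field $\mathbb{Q}_p$. This distinction is detected by local geometry (smooth manifold versus polyhedral complex, with connected versus totally disconnected link) or equivalently by the Tits boundary, which is a real spherical building in one case and a $\mathbb{Q}_p$-spherical building in the other. Proposition \ref{1} should rule out a QI embedding between spaces whose factor types disagree, giving (1) in both directions.

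For (2), the relevant invariant is the local field $\mathbb{Q}_p$ versus $\mathbb{Q}_q$. A QI embedding $Y_p \to Y_q$ would, through Proposition \ref{1}, induce a structure-preserving map of spherical Tits buildings or of link data, from which the field of definition can be recovered by the fundamental theorem of projective geometry (since the rank is $n-1 \ge 3$). Since $\mathbb{Q}_p \not\cong \mathbb{Q}_q$, no such embedding exists. The main obstacle is precisely this step: an embedding need not be surjective, so naively one only recovers an inequality of combinatorial data like thicknesses ($p+1$ versus $q+1$), which is insufficient to force $p = q$ when $p < q$. The serious work is therefore in verifying that Proposition \ref{1}'s output is rigid enough -- e.g.\ gives a proper embedding of spherical buildings preserving incidence relations -- to pin down the field itself, not merely its residue size.
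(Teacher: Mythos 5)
Your high-level instinct (quote Proposition \ref{1} and then distinguish the algebraic data) is the right one, but the proposal misses what Proposition \ref{1} actually delivers and, as a result, never supplies the step that the corollary genuinely needs. Proposition \ref{1} says: a quasi-isometric embedding $X \to Y$ forces the field associated with $X$ to be isomorphic to a \emph{subfield} of the field associated with $Y$. So the paper's deduction of the corollary is purely field-theoretic: (1) no subfield of $\mathbb{Q}_p$ is isomorphic to $\R$ and no subfield of $\R$ is isomorphic to $\mathbb{Q}_p$; (2) no subfield of $\mathbb{Q}_q$ is isomorphic to $\mathbb{Q}_p$ when $p \neq q$. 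In your part (2) you correctly identify that non-surjectivity is the danger (thickness only gives an inequality), but you then leave exactly this point as ``serious work to be verified'' --- whereas it is precisely what Proposition \ref{1} already provides via the subfield conclusion (through Fisher--Whyte's top-dimensional subbuilding at infinity, Kleiner--Leeb, and Proposition \ref{field}). What remains, and what you do not supply, is the arithmetic fact: invoking $\mathbb{Q}_p \not\cong \mathbb{Q}_q$ is not enough, since one must rule out $\mathbb{Q}_p$ embedding as a proper subfield of $\mathbb{Q}_q$. This is true but requires an argument, e.g.\ $1+p$ is an $n$-th power in $\mathbb{Q}_p$ for every $n$ prime to $p$, while an element of $\mathbb{Q}_q^{\times}$ of infinite order cannot be a $q^k$-th power for all $k$.

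Part (1) has the same issue in a different guise: ``the factor types disagree (manifold versus polyhedral complex, connected versus totally disconnected links)'' is not by itself a quasi-isometry-invariant argument, and it is not the mechanism Proposition \ref{1} offers; for an embedding one only sees a top-dimensional subbuilding of $\partial_T Y$, so again the conclusion must be extracted from the subfield statement. Concretely: a quasi-isometric embedding $X \to Y_p$ would make $\R$ isomorphic to a subfield of $\mathbb{Q}_p$, impossible because a real closed subfield $K \subset \mathbb{Q}_p$ would give an algebraically closed subfield $K(\sqrt{-1}) \subset \mathbb{Q}_p(\sqrt{-1})$, yet a finite extension of $\mathbb{Q}_p$ contains only finitely many roots of unity; and an embedding $Y_p \to X$ would make $\mathbb{Q}_p$ isomorphic to a subfield of $\R$, impossible because $-1$ is a sum of squares in $\mathbb{Q}_p$, so $\mathbb{Q}_p$ is not formally real. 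With these three field facts added, your outline becomes the paper's proof; without them, the proposal stops short of proving either assertion.
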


\subsection{Motivation from expanders}
The above theorems are inspired by \textit{families of expander graphs} which have many applications in mathematics and computer science due to their distinctive property, namely highly connected (sharing a common Cheeger constant) but also sparse (finite graphs of uniformly bounded degree whose numbers of vertices go to infinity) \cite{lubotzky1994discrete, hoory2006expander}. An intuitive real-life application of expanders is the economical placement of telephone towers for effective coverage.

Our main result is motivated by the following question about expanders which is posed by Mendel-Naor \cite[Section 1.1 and Section 9]{mendel2014nonlinear}, Ostovskii \cite [Theorem 5.76]{ostrovskii2013metric}, and at an Oberwolfach workshop \cite{khukhro2019mini}. The question will be discussed in more detail in Section \ref{5}.
\begin{question}\label{question} 
Distinguish expanders up to large-scale geometry.
\end{question}
There have been a lot of results in this direction constructing and distinguishing expanders and even super-expanders up to large-scale geometry \cite{khukhro2017expanders, das2018geometry, de2019superexpanders,fisher2019rigidity,nowak2017warped,vigolo2019discrete}, etc. 

Since Margulis first gave an explicit construction of expanders \cite{gregory1973margulis}, it is known that box spaces of groups with Kazhdan’s Property (T), and more generally with Property ($\tau$), form families of expander graphs. Hence Theorem \ref{kvmain} can be reformulated from the perspective of Question \ref{question} as follows.
\begin{corollary} [to Theorem \ref{kvmain}] \label{kvmain2}
    Let $X_p$ be a box space of $\mathrm{SL}_2(\mathbb{Z}[\sqrt{p}])$.
    As $p$ varies, $X_p$'s form families of expander graphs which are not coarse equivalent to each other. 
\end{corollary}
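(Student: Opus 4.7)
The plan is to combine the Margulis construction of explicit expander families via box spaces with the coarse rigidity output of Theorem \ref{kvmain}. The non coarse equivalence for distinct primes $p$ and $q$ is literally the content of Theorem \ref{kvmain}, so the only additional thing to verify is that each $X_p$ is actually an expander family.

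To do this, I would first invoke the standard principle, going back to Margulis, that if a finitely generated group $G$ has a uniform spectral gap on its quasi-regular representations on $\ell^2(G/N_i)$---as happens in particular when $G$ has Kazhdan's Property (T), or more generally property ($\tau$) with respect to the family $\{N_i\}$---then the Cayley graphs $\mathrm{Cay}(G/N_i)$ have uniformly positive Cheeger constants, hence form an expander family. For $G = \mathrm{SL}_2(\mathbb{Z}[\sqrt{p}])$, which is an arithmetic lattice in $\mathrm{SL}_2(\mathbb{R}) \times \mathrm{SL}_2(\mathbb{R})$ via the two real embeddings of $\mathbb{Q}(\sqrt{p})$, property ($\tau$) with respect to the family of principal congruence subgroups follows from Selberg's $\tfrac{3}{16}$ theorem applied in each factor. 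Since the box spaces $X_p$ considered in Theorem \ref{kvmain} are built from congruence subgroups, this yields the required expansion.

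Putting the two ingredients together, each $X_p$ is an expander family, and by Theorem \ref{kvmain} the $X_p$ are pairwise non coarse equivalent as $p$ varies over primes, which is the statement of the corollary. The only subtle step is making sure that the specific family $\{N_i\}$ used to define $X_p$ in \cite{khukhro2017expanders} is a congruence tower (so that ($\tau$) applies); once this is checked, the corollary follows immediately.
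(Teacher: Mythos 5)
Your overall architecture coincides with the paper's implicit proof: the non coarse equivalence is exactly Theorem \ref{kvmain}, and the expansion of each $X_p$ is a Margulis-type spectral gap argument (the paper only gestures at this by citing Margulis and Property (T); note in passing that $\mathrm{SL}_2(\mathbb{Z}[\sqrt{p}])$ does \emph{not} have Property (T), being a lattice in $\mathrm{SL}_2(\mathbb{R})\times\mathrm{SL}_2(\mathbb{R})$, so your property $(\tau)$ formulation is the right correction). However, two steps of your justification need repair. First, you cannot obtain the spectral gap by ``applying Selberg's $\tfrac{3}{16}$ theorem in each factor'': the projections of the irreducible lattice $\mathrm{SL}_2(\mathbb{Z}[\sqrt{p}])$ to the two $\mathrm{SL}_2(\mathbb{R})$ factors are dense, not discrete, so Selberg's theorem (a statement about congruence covers of the modular surface, i.e.\ about $\mathrm{SL}_2(\mathbb{Z})$) does not apply factorwise. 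What you need is the generalization of Selberg's bound to Hilbert modular groups, or more generally property $(\tau)$ of arithmetic lattices with respect to congruence subgroups; cite that instead.

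Second, the corollary concerns an \emph{arbitrary} box space of $\mathrm{SL}_2(\mathbb{Z}[\sqrt{p}])$, i.e.\ an arbitrary decreasing family of finite-index normal subgroups with trivial intersection, not necessarily a congruence tower, so your closing caveat (``check that the family used in \cite{khukhro2017expanders} is a congruence tower'') does not close the gap and is not how the statement is phrased. The missing ingredient is Serre's congruence subgroup property for $\mathrm{SL}_2$ over rings of $S$-integers with infinitely many units, which applies to $\mathbb{Z}[\sqrt{p}]$: every finite-index subgroup contains a congruence subgroup, so property $(\tau)$ with respect to congruence subgroups upgrades to property $(\tau)$ with respect to all finite-index subgroups, and hence \emph{every} box space of $\mathrm{SL}_2(\mathbb{Z}[\sqrt{p}])$ is an expander family. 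With these two repairs your argument is exactly the intended proof.
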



\subsection{Ramanujan: the optimal expanders}\label{intronra}
A \textit{$k$-regular Ramanujan graph} is a 
$k$-regular graph whose eigenvalues $\lambda$ of 
the adjacency matrix satisfy either 
$|\lambda|=k$ (trivial eigenvalues) or 
$|\lambda| \leq 2 \sqrt{k-1}$ (non-trivial eigenvalues). 
Lubotzky-Philips-Sarnak constructed $k$-regular Ramanujan graphs as quotients of the Euclidean building of $\rm{PGL}_2(F)$, which is a tree, by cocompact lattices of $\rm{PGL}_2(F)$ \cite{lubotzky1988ramanujan}.
In other words, the non-trivial spectrum of Ramanujan graphs is bounded by the spectrum of their covering tree. 
A remarkable feature is that families of Ramanujan graphs form families of expander graphs with the optimal spectrum in the sense of Alon-Boppana \cite{alon1986eigenvalues} on which the random walk is distributed in the fastest way (see \cite[Section 5]{lubotzky2020ramanujan} and the references therein). 

Generalizing this to high dimensions, Lubotzky-Samuels-Vishne defined and constructed \textit{Ramanujan complexes} as quotients of the Euclidean building of $\rm{PGL}_d(F)$ for $d \geq 3$ by cocompact lattices of $\rm{PGL}_d(F)$ whose non-trivial spectrum is derived from their covering Euclidean buildings \cite{lubotzky2005explicit}. 
Similar to 1-dimension, families of Ramanujan complexes have the optimal spectrum in the sense of high dimensional Alon-Boppana shown by Li \cite{li2004ramanujan}.

One might expect that Ramanujan complexes form families of ``high dimensional expanders'' ideally exhibiting the distinctive property of expander graphs not only through edges (1-dimension) but also through triangles (2-dimension), tetrahedra (3-dimension), and so on. However, high dimensional expanders are not yet defined and we only have several proposed concepts. See \cite{lubotzky2014ramanujan}, \cite{lubotzky2018high} for candidates of a definition and how they are related to each other.

Among them, we focus on \textit{Gromov high dimensional expanders} since they meet the natural expectation as follows. By a \textit{family of Gromov geometric (resp.\@ topological) high dimensional expanders},
we mean a collection $\{X_i\}$ of
finite $n$-dimensional simpicial complexes of 
uniformly bounded degree satisfying the \textit{Gromov $c$-overlapping property} for some fixed $c>0$ defined in \cite{gromov2010singularities}, that is, 
for any affine (resp.\@ continuous) map 
$f:X_i \to \mathbb{R}^n$,
there exists $z \in \mathbb{R}^n$ which is covered by $c$-proportion of the images of $n$-simplicies under $f$.

\begin{theorem} [geometric high dimensional expanders \cite{fox2012overlap, evra2017finite}] \label{geo}
    Let $\mathscr{X}_p= \{ \Gamma(I_i)\backslash \mathcal{B}_p \}_i$ be a family of Ramanujan complexes built from the Euclidean building $\mathcal{B}_p$ of $\mathrm{PGL}_d(\mathbb{F}_p((y)))$.
    For each $d \geq 3$, there exists $p_0=p_0(d) > 0$ such that $\mathscr{X}_p$ forms a family of Gromov geometric high dimensional expanders for every prime $p > p_0$. 
\end{theorem}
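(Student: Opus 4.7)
The plan is to combine two ingredients, each a high-dimensional analogue of a classical fact about expander graphs. First, I would appeal to the Fox--Gromov--Lafforgue--Naor--Pach theorem \cite{fox2012overlap}, which asserts that a bounded-degree simplicial complex whose links at all codimensions have sufficiently strong spectral expansion must satisfy the Gromov $c$-overlapping property, with $c$ depending only on the dimension and on the spectral data. Morally this is a higher-dimensional version of Pach's overlap theorem: the spectral hypothesis delivers a cascade of Cheeger-type inequalities on the links, and an averaging argument then forces a positive fraction of the top-dimensional simplices to cluster near a common point under any continuous map into $\mathbb{R}^{d-1}$.

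Second, I would verify this spectral hypothesis for $\mathscr{X}_p$ when $p$ is large. The links of vertices in the Bruhat--Tits building $\mathcal{B}_p$ are spherical buildings of type $A_{d-1}$ over $\mathbb{F}_p$, whose expansion improves explicitly as $p$ grows. Combining this with the Ramanujan property---which bounds the non-trivial spectrum of the higher Laplacians on $\Gamma(I_i)\backslash \mathcal{B}_p$ in terms of representation-theoretic data of $\mathrm{PGL}_d(\mathbb{F}_p((y)))$, uniformly in the index $i$---yields uniform spectral control over the entire family. The quantitative form of this combination is carried out by Evra \cite{evra2017finite}, and this is precisely where the threshold $p_0(d)$ enters: one needs $p$ large enough for the link expansion and the Ramanujan bound together to meet the hypothesis of FGLNP.

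Putting the two pieces together gives, for every $p > p_0(d)$, a uniform overlap constant $c = c(d) > 0$, and hence an infinite family of Gromov geometric high-dimensional expanders, proving the theorem. The main obstacle I anticipate is quantitative bookkeeping: several inequivalent notions of higher-dimensional spectral expansion appear in the literature (coboundary expansion, link expansion, cosystolic expansion), and one must check that the precise notion required by FGLNP is the one controlled by the Ramanujan bound together with the spectrum of spherical $A_{d-1}$-buildings. Tracking constants through this translation is what pins down the explicit threshold $p_0(d)$ and forces the assumption that $p$ be sufficiently large.
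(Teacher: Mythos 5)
The paper gives no proof of this theorem---it is quoted directly from \cite{fox2012overlap, evra2017finite}---and your outline (the Fox--Gromov--Lafforgue--Naor--Pach overlap criterion combined with verification of the local spectral hypothesis for quotients of $\widetilde{A}_{d-1}$-buildings at large $p$, as carried out by Evra) is precisely the strategy of those cited works, so it matches. One small correction: the overlap property is driven by bounded degree together with the expansion of the vertex links (spherical $A_{d-1}$-buildings over $\mathbb{F}_p$, whose spectral gap improves as $p$ grows), rather than by the Ramanujan bound on the quotients' spectrum, which is why the conclusion holds for arbitrary finite quotients of $\mathcal{B}_p$ once $p > p_0(d)$, the Ramanujan property playing no essential role in this particular statement.
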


\begin{theorem}[topological high dimensional expanders \cite{kaufman2016isoperimetric, evra2024bounded}] \label{top}
    Let $\mathscr{X}_p= \{ \Gamma(I_i)\backslash \mathcal{B}_p \}_i$ be a family of Ramanujan complexes built from the Euclidean building $\mathcal{B}_p$ of $\mathrm{PGL}_d(\mathbb{F}_p((y)))$. For each $d \geq 4$, there exists $p_0=p_0(d) > 0$ such that the family of $(d-2)$-skeleta of the $(d-1)$-dimensional simiplicial complexes of $\mathscr{X}_p$ forms a family of Gromov topological high dimensional expanders for every prime $p > p_0$.
\end{theorem}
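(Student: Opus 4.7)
The plan is to invoke Gromov's criterion that, for a pure $k$-dimensional simplicial complex $Y$, uniform \emph{cosystolic expansion} in every codimension $0,1,\dots,k-1$ (with constants bounded below by some $\varepsilon>0$) implies the $c$-overlap property for some $c=c(\varepsilon,k)>0$. Applied with $k=d-2$, this reduces the theorem to producing a uniform lower bound on the cosystolic expansion constants of the $(d-2)$-skeleta of $\Gamma(I_i)\backslash\mathcal{B}_p$ depending only on $d$, valid for all $p>p_0(d)$.

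To produce such a uniform bound I would follow the Garland--Kaufman--Kazhdan--Lubotzky local-to-global strategy. The guiding principle is that cosystolic expansion of a complex is governed by the \emph{coboundary expansion} of its vertex links together with a global systolic hypothesis. Every vertex link in $\mathcal{B}_p$, and hence in any quotient $\Gamma(I_i)\backslash\mathcal{B}_p$, is a spherical Tits building of type $A_{d-1}$ over $\mathbb{F}_p$, whose coboundary-expansion constants can be bounded below via Laplacian spectral gaps that improve with $p$. Thus for $p>p_0(d)$ the local expansion exceeds the threshold required by the local-to-global criterion. Restricting to the $(d-2)$-skeleton is essential here: the worst codimension at which non-trivial cocycles must be controlled then drops from $d-2$ (in the full complex) to $d-3$, placing every codimension within range of the link argument. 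Controlling cosystolic expansion in the top codimension of the full $(d-1)$-dimensional complex would require substantially deeper representation-theoretic input, which is why the statement is phrased for the $(d-2)$-skeleton.

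Uniformity across the family $\{\Gamma(I_i)\backslash\mathcal{B}_p\}_i$ comes essentially for free, because vertex links depend only on the local structure of the covering building and are therefore identical for every $i$. Moreover, as $i$ grows the systolic girth of $\Gamma(I_i)\backslash\mathcal{B}_p$ tends to infinity, which only widens the gap between the coboundary and cosystolic constants, since large systoles prohibit small non-trivial cocycles. The principal obstacle I expect is the quantitative control of non-trivial cocycles in intermediate codimensions: they must be shown, using Ramanujan-type spectral bounds on $\mathcal{B}_p$ together with Garland's inequality on links, to carry comparable weight, so that every small cochain lies close to a coboundary. Once this inductive step is established for every codimension up to $d-3$, the overlap constant $c=c(d)>0$ produced by Gromov's criterion is uniform in $i$ and in $p$ for $p>p_0(d)$, and the theorem follows.
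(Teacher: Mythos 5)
The paper does not prove Theorem \ref{top} at all: it is imported verbatim from the cited works (Kaufman--Kazhdan--Lubotzky and Evra--Kaufman), so there is no internal proof to compare against. Your outline is essentially a faithful summary of the strategy of those references --- Gromov's criterion that uniform cosystolic expansion in all codimensions of a pure $(d-2)$-dimensional complex yields the topological $c$-overlap property, combined with the local-to-global argument in which vertex links of $\Gamma(I_i)\backslash\mathcal{B}_p$ are spherical buildings of type $A_{d-1}$ over finite fields, and with the passage to the $(d-2)$-skeleton precisely so that every relevant codimension is reachable by the link argument --- so in that sense you are taking the same route as the source of the theorem. Two caveats. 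First, your sketch defers exactly the parts that constitute the actual content of Evra--Kaufman: the coboundary expansion of the spherical-building links with constants depending only on $d$, the machinery of locally minimal cochains, and the lower bound on the weight of non-trivial cocycles (the cosystole bound); as written, these are named as obstacles rather than proved, so the proposal is a roadmap, not a proof. Second, the role of $p$ is slightly misattributed: the coboundary expansion constants of the links are bounded below in terms of $d$ alone, and the condition $p>p_0(d)$ enters through the spectral gap of the underlying graphs of the links (second eigenvalue $O(1/\sqrt{p})$), which must beat a $d$-dependent threshold in the local-to-global criterion; it is not that the links' coboundary expansion itself improves with $p$. Since the theorem is used in this paper only as a black box (for Corollary \ref{coro}), citing it as the paper does, or reproducing the Evra--Kaufman argument in full, are the only two honest options; your text sits in between.
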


Combining our main theorem with these results, we can observe a high dimensional analogue of Corollary \ref{kvmain2} as distinct primes produce significantly different types of optimal high dimensional expanders.
\begin{corollary} [to Theorem \ref{3}] \label{coro} 
    For each $d \geq 4$, there exists $p_0 =p_0(d)>0$ such that $\mathscr{X}_p$'s (resp.\@ the codimension one skeleta of $\mathscr{X}_p$'s) form infinite families of Gromov geometric (resp.\@ topological) high dimensional expanders 
    which do not quasi-isometrically embed into one another 
    for every prime $p > p_0$.
\end{corollary}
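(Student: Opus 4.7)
The plan is to combine Theorem \ref{3} with Theorems \ref{geo} and \ref{top} directly. I would set $p_0(d) = \max\{p_0^{\mathrm{geo}}(d), p_0^{\mathrm{top}}(d)\}$, where the two constants are those supplied by Theorem \ref{geo} and Theorem \ref{top} respectively. Then for every prime $p > p_0$, the family $\mathscr{X}_p$ is an infinite family of Gromov geometric high dimensional expanders, and the family of its codimension-one skeleta is an infinite family of Gromov topological high dimensional expanders.

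For the geometric half of the statement, nothing further is needed: Theorem \ref{3} says that $\mathscr{X}_p$ does not quasi-isometrically embed in $\mathscr{X}_q$ whenever $p$ and $q$ are distinct primes, so in particular this holds for $p, q > p_0$.

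For the topological half, I need to transfer the non-embeddability from $\mathscr{X}_p$ to its $(d-2)$-skeleton $\mathscr{X}_p^{(d-2)}$. The key observation is that the inclusion $\mathscr{X}_p^{(d-2)} \hookrightarrow \mathscr{X}_p$ is a quasi-isometry: the two complexes share the same vertex set and $1$-skeleton, and since $\mathscr{X}_p$ has uniformly bounded degree and bounded dimension, any geodesic crossing the interior of a top-dimensional simplex can be replaced by a path along its boundary at the cost of a bounded multiplicative factor, while every point of $\mathscr{X}_p$ lies within distance at most one of $\mathscr{X}_p^{(d-2)}$. Consequently, a hypothetical quasi-isometric embedding $\mathscr{X}_p^{(d-2)} \to \mathscr{X}_q^{(d-2)}$ would compose with these quasi-isometric inclusions to yield a quasi-isometric embedding $\mathscr{X}_p \to \mathscr{X}_q$, contradicting Theorem \ref{3}. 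No substantive obstacle appears; the corollary is a bookkeeping exercise once Theorem \ref{3} is established, the only point requiring care being the elementary bounded-geometry fact that inclusion of the codimension-one skeleton is a quasi-isometry, which guarantees that the quasi-isometric rigidity established for $\mathscr{X}_p$ descends to its topological high dimensional expander realization.
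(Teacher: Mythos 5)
Your proposal is correct and follows essentially the same route as the paper: the geometric part is immediate from Theorem \ref{3} together with Theorem \ref{geo}, and the topological part is handled by observing that the codimension-one skeleton is quasi-isometric to the full complex (the paper cites an inductive use of Lemma \ref{skeleton}, you give the equivalent direct bounded-geometry argument), so a quasi-isometric embedding of skeleta would produce one of the complexes, contradicting Theorem \ref{3}. The only point worth making explicit is that the quasi-isometry constants for the skeleton inclusions depend only on the chamber geometry of $\mathcal{B}_p$ and $\mathcal{B}_q$, hence are uniform over the family, which your bounded-degree remark already covers.
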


We refer to \cite{lubotzky2014ramanujan, lubotzky2018high, lubotzky2020ramanujan} for Ramanujan graphs, Ramanujan complexes, and high dimensional expanders.

\subsection{Proof strategy}
The strategy to prove Theorem \ref{3} is similar to  \cite{khukhro2017expanders}. The first step is to show a large-scale geometry rigidity, and the second step is to produce coarsely disjoint expanders from different primes $p$.
The first step is done by Proposition \ref{2} basically
saying that quasi-isometric embeddings of finite quotients of Euclidean buildings induce quasi-isometric embeddings of the Euclidean buildings. Since box spaces and finite quotients of Euclidean buildings have similar structure, the first step is obtained naturally by translating Khukhro-Valette's Proposition \ref{KV} for box spaces in terms of Euclidean buildings.

For the second step, while Khukhro-Valette apply the quasi-isometry rigidity of Farb-Schwartz to the lattices $\mathrm{SL}_2(\mathbb{Z}[\sqrt{p}]) $\cite{farb1996large}, we use Proposition \ref{1} 
showing that two Euclidean buildings (or even symmetric spaces) from ``different'' fields do not quasi-isometrically embed into one another. Proposition \ref{1} follows from the quasi-isometry/quasi-isometric embedding rigidity of
Kleiner-Leeb and Fisher-Whyte \cite{kleiner1997rigidity}, \cite{fisher2018quasi} together with the classical building theory. While we do not use it, the work of Eskin-Farb is also relevant \cite{eskin1997quasi}.

\medskip 
\textbf{Acknowledgement.} The author is deeply grateful to David Fisher for valuable insights, stimulating discussions, and constant support throughout the process. The author thanks Jean-François Lafont and Wouter Van Limbeek for valuable comments and helpful discussions on Section 5. 
The author also thanks Thang Nguyen and Ralf Spatzier for careful reading and valuable comments. Further thanks go to Homin Lee, Insung Park, and Brian Udall for providing helpful feedback on the earlier version. This project was supported in part by NSF under grant DMS-2246556 and by Institut Henri Poincaré (UAR 839 CNRS-Sorbonne Université).

\section{Preliminaries}

In this section, we review some terminology and properties that we will use. We first recall the definitions of quasi-isometries and quasi-isometric embeddings for spaces and infinite families of spaces.
\begin{definition} \label{def}
    Let $X$ and $Y$ be metric spaces. For $L\geq1$ and $C\geq 0$, a map $\phi: X \to Y$ is a \textit{$(L,C)$-quasi-isometric embedding} if 
    
    \begin{equation}\label{qi}
       \frac{1}{L}d(a,b)-C \leq d(\phi(a),\phi(b)) \leq Ld(a,b)+C 
    \end{equation}
    for every $a,b \in X$. 
    The map $\phi$ is a \textit{$(L,C)$-quasi-isometry} if it is a $(L,C)$-quasi-isometric embedding and \textit{coarsely surjective}, that is, for every $y \in Y$, there exists $x \in X$ such that $d(y,\phi(x))\leq C$. 

    Let $\mathscr{X}=\{X_i\}_i$ and $\mathscr{Y}=\{Y_i\}_i$ be infinite families of metric spaces. We say that $\mathscr{X}$ \textit{quasi-isometrically embeds} in (resp.\@ is \textit{quasi-isometric} to) $\mathscr{Y}$ if there exist $L \geq 1$, $C \geq 0$, and $f_i: X_i \to Y_i$ such that $f_i$ is a $(L,C)$-quasi-isometric embedding (resp.\@ $(L,C)$-quasi-isometry) for every $i$.
\end{definition}

\subsection{Definitions of buildings}
For a concrete background on spherical buildings, Euclidean buildings, and their relationship,
we refer to \cite{abramenko2008buildings} for a classical viewpoint and \cite{kleiner1997rigidity}, \cite{MR1744486} for a geometric viewpoint. 

\medskip
\textbf{From a classical point of view} 
A \textit{ spherical (resp.\@ Euclidean) building} is a simplicial complex with subcomplexes called \textit{apartments} which are spherical (resp.\@ Euclidean) Coxeter complexes associated with a finite (resp. infinite affine) Coxeter group $W$ (resp.\@ $W_{\rm{aff}}$) satisfying the following axioms:

\begin{enumerate}
        \item (Plenty of apartments) Any two simplices in the building are contained in an apartment.
        \item (Compatible apartments) For any two apartments $\Sigma_1$ and $\Sigma_2$, and any top-dimensional simplices called \textit{chambers}
        $C_1$ and $C_2$ contained in $\Sigma_1 \cap \Sigma_2$, there exists a simplicial isometry from $\Sigma_1$ to $\Sigma_2$ which fixes $C_1$ and $C_2$ pointwise. 
     \end{enumerate}

Each apartment can be thought of as being modeled on the sphere $\mathbb{S}^{n-1}$ (resp.\@ Euclidean space $\mathbb{R}^n$) triangulated by the reflection hyperplanes of the Coxeter group (resp.\@ affine Coxeter group).

\begin{definition}
    Let $X$ be a building. A subcomplex $X'$ of $X$ is a \textit{subbuilding} of $X$ if it is a building itself and every apartment of $X'$ is an apartment in $X$.
\end{definition}

\begin{prop} \cite[Proposition 4.63]{abramenko2008buildings}
    Every subbuilding $X'$ of a building $X$ has the same Coxeter matrix as $X$.
\end{prop}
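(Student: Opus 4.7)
The plan is to pivot on a single apartment that lies simultaneously in $X$ and in $X'$, and to use the fact that a building's Coxeter matrix is already encoded in any one of its apartments.

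First, I would produce a common apartment. Because $X'$ is itself a building it contains at least one apartment $\Sigma'$, and by the subbuilding axiom stated in the excerpt every apartment of $X'$ is an apartment of $X$. Hence $\Sigma'$ serves simultaneously as an apartment of both buildings. Fix any chamber $C \subset \Sigma'$.

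Second, I would read off the Coxeter data intrinsically from the pair $(\Sigma', C)$. Recall that for any building, one recovers a standard generating set $S$ of the Coxeter group $W$ by taking the reflections of the apartment across the walls of $C$, and one recovers the Coxeter matrix $M = (m_{ij})$ as the matrix of orders of the products $s_i s_j$. Applying this recipe with $(\Sigma', C)$ viewed as an apartment of $X$ produces the Coxeter matrix of $X$, while applying it with $(\Sigma', C)$ viewed as an apartment of $X'$ produces the Coxeter matrix of $X'$. Since the inputs are literally the same pair $(\Sigma', C)$, so are the outputs, and the two matrices coincide.

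The only substantive point to verify is that the recipe above really is intrinsic to a single apartment, i.e.\@ that the generating set $S$ and the orders $m_{ij}$ depend only on the simplicial structure of $\Sigma'$ together with the choice of chamber, and not on some global datum of the ambient building such as the $W$-valued chamber distance. This is precisely the standard identification of each apartment with the Coxeter complex of its associated Coxeter group, which can be established by working with foldings and reflections purely inside $\Sigma'$. Granting this foundational fact, developed in Abramenko-Brown before Proposition 4.63, the argument is complete. I expect this intrinsicness step to be the main obstacle, since everything else is bookkeeping once one knows that an apartment determines its own Coxeter system.
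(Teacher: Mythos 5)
Your argument is correct: the paper itself gives no proof (it simply cites Abramenko--Brown, Proposition 4.63), and your route through a common apartment plus the fact that a Coxeter complex intrinsically determines its Coxeter system (via foldings/reflections, or by reading each $m_{ij}$ off the rank-two residues) is exactly the standard argument behind that citation. Nothing further is needed beyond the foundational intrinsicness fact you already flag, which is established in Abramenko--Brown's treatment of Coxeter complexes prior to that proposition.
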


In other words, subbuildings share the same Coxeter structure as the ambient buildings with possibly losing some information.

\medskip
\textbf{From a geometric point of view} 
Buildings are equipped with the metrics from their apartments.
A spherical (resp.\@ Euclidean) building is a $\rm{CAT}(1)$ (resp.\@ $\rm{CAT}(0)$)-space denoted by $\Delta$ (resp.\@ $\cal{B}$) together with the \textit{charts} $i:\mathbb{S}^{n-1} \to \Delta$ 
(resp.\@ $i:\mathbb{R}^{n} \to \cal{B}$), which are isometric embeddings and each $i(\mathbb{S}^{n-1})$ (resp.\@ $i(\mathbb{R}^n)$) is an \textit{apartment} in $\Delta$ (resp.\@ $\cal{B}$).

In particular, each Euclidean building has an additional structure, the \textit{directions of geodesic segments} ``looking at infinity'' satisfying two more axioms (we refer to \cite[Section 4.1.2]{kleiner1997rigidity}):
Consider an apartment $E$ in $\cal{B}$ and its \textit{Tits boundary} $\partial_T E$, which is the set of all equivalence classes of geodesic rays equipped with the Tits metric. We have a map
$\rho: \rm{Isom}(E) \to \rm{Isom}(\partial_T E)$
sending each isometry in $E$ to its rotational part.
Then $\rho$ maps the given affine Weyl group $W_{\rm{aff}}$ of $E$ to the Weyl group $W$ of $\partial_T E$.
An oriented geodesic segment $\overline{xy}$ in $E$ corresponds to a point $\xi \in \partial_T E$ looking at infinity. We define the \textit{direction of} $\overline{xy}$ as the image of $\xi$ in $(\partial_T E)/W$. 

This interaction yields the following.

\begin{Prop} \cite [Proposition 4.2.1]{kleiner1997rigidity}
    For a Euclidean building $\cal{B}$, its Tits boundary $\partial_T\cal{B}$ is a spherical building.
\end{Prop}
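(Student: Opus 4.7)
The plan is to realize $\partial_T\cal{B}$ as a spherical building by taking, as its apartments, precisely the Tits boundaries $\partial_T E$ of the Euclidean apartments $E \subset \cal{B}$, and inducing the simplicial structure on $\partial_T\cal{B}$ from the wall structure on each $\partial_T E$. Concretely, for a chart $i:\mathbb{R}^n \to \cal{B}$ with image $E$, the Tits boundary $\partial_T E$ is isometric to the round sphere $\mathbb{S}^{n-1}$, and the rotational part $W = \rho(W_{\rm{aff}})$ of the affine Weyl group of $E$ is exactly the finite Coxeter group whose reflection hyperplanes in $\mathbb{R}^n$ descend to a triangulation of $\mathbb{S}^{n-1}$. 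This makes each $\partial_T E$ a spherical Coxeter complex, and the images of sector faces at infinity will be the simplices of $\partial_T\cal{B}$.

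Next I would verify the ``plenty of apartments'' axiom: any two ideal simplices $\sigma,\tau \subset \partial_T\cal{B}$ should lie in some $\partial_T E$. Represent $\sigma$ and $\tau$ by Weyl sectors $S_\sigma, S_\tau \subset \cal{B}$, each of which is contained in some Euclidean apartment. The crucial input is the Euclidean building axiom that any two Weyl sectors in $\cal{B}$ contain subsectors $S'_\sigma \subset S_\sigma$ and $S'_\tau \subset S_\tau$ lying in a common apartment $E$. Since a sector and its subsector determine the same ideal simplex at infinity, we obtain $\sigma,\tau \subset \partial_T E$. The ``directions of geodesic segments'' axiom stated just before the proposition is what allows one to compare and merge sectors in this way, so this is where that axiom is used.

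For the compatibility axiom, suppose $\Sigma_1 = \partial_T E_1$ and $\Sigma_2 = \partial_T E_2$ both contain chambers $C_1, C_2$. Each ideal chamber $C_j$ corresponds, up to parallelism, to a Weyl sector in $\cal{B}$, and by choosing deep enough subsectors one arranges representatives $S_1, S_2 \subset E_1 \cap E_2$. The compatibility axiom for the Euclidean building $\cal{B}$ then supplies a simplicial isometry $E_1 \to E_2$ fixing $S_1 \cup S_2$ pointwise; passing to Tits boundaries yields an isometry $\Sigma_1 \to \Sigma_2$ fixing $C_1$ and $C_2$. One then checks that the Tits metric on $\partial_T\cal{B}$ restricts on each $\partial_T E$ to its spherical metric, so that $\partial_T\cal{B}$ becomes a $\mathrm{CAT}(1)$-space built from spherical Coxeter complexes as required.

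The main obstacle is the first axiom: producing a single Euclidean apartment containing subsectors of two arbitrarily prescribed sectors. This is where the geometric content of the Euclidean building axioms really enters; in the classical $\mathrm{BN}$-pair setting it follows from standard Bruhat-Tits theory, but in the purely geometric framework of Kleiner--Leeb one must genuinely use the ``direction'' axiom and the $\mathrm{CAT}(0)$ geometry of $\cal{B}$ to conclude that two asymptotic Weyl sectors can be made to coexist inside one flat $E$. Once this merging step is established, the remaining verifications (compatibility, agreement of metrics, well-definedness of the simplicial structure) are essentially formal consequences of the apartment axioms for $\cal{B}$.
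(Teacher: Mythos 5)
You should note first that the paper gives no proof of this proposition---it is quoted verbatim from Kleiner--Leeb---so your argument has to stand on its own, and as written it has a genuine gap at exactly the point you yourself flag. The statement that any two Weyl sectors of $\cal{B}$ admit subsectors lying in a common apartment is \emph{not} an axiom in either framework in play: the classical definition recalled in this paper has only the two axioms (any two simplices lie in a common apartment; compatibility of apartments), and Kleiner--Leeb's geometric definition has charts, the direction/angle-rigidity axioms, and compatibility. In both settings the sector-merging statement is a substantive theorem (in the simplicial setting it goes back to Bruhat--Tits; see \cite{abramenko2008buildings}), and it is essentially equivalent to the ``plenty of apartments'' axiom for $\partial_T\cal{B}$ that you are trying to verify---so invoking it as ``the Euclidean building axiom'' in your second paragraph is circular, and your final paragraph concedes it is ``the main obstacle'' without supplying any argument. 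The core of the proof is therefore missing, not merely deferred.

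For comparison, the proof in the cited source does not go through sectors at all: Kleiner--Leeb define spherical buildings metrically (a $\mathrm{CAT}(1)$ space with an anisotropy map to the model chamber satisfying direction, angle-rigidity, and apartment axioms) and verify those axioms for $\partial_T\cal{B}$ directly from their Euclidean building axioms, in particular from the requirement that every geodesic segment, ray, and complete geodesic of $\cal{B}$ lies in an apartment; this is what produces, for two ideal points, a single apartment $E$ with a Tits geodesic between them inside $\partial_T E$. Your remaining steps (identifying $\partial_T E$ with the spherical Coxeter complex of $W=\rho(W_{\mathrm{aff}})$, and deducing the compatibility axiom at infinity) are reasonable in outline, though note that the claim that the Tits metric of $\partial_T\cal{B}$ restricts to the round metric on each $\partial_T E$ is also not automatic for a flat in a $\mathrm{CAT}(0)$ space and needs an argument of the same nature. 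To repair the proposal you must either prove the subsector theorem from the stated axioms or switch to the metric verification as in Kleiner--Leeb.
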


\subsection{Groups and buildings}

\begin{definition}
    A building is said to be \textit{thick} if every codimension one simplex called \textit{panel} of chambers is a face of at least three chambers.
\end{definition}
A notable feature of thick buildings is that their automorphism groups are rich enough. As in \cite [Section 3.7, Section 4.9]{kleiner1997rigidity}, irreducible buildings of sufficiently high rank can be reduced to thick buildings and they were classified by Jacques Tits. It can be reformulated as follows.
\begin{theorem} [Classification of buildings] \label{classify} 
We have the following one-to-one correspondences up to isomorphisms:
\begin{itemize}
    \item Every thick irreducible spherical building of rank at least $3$ corresponds to a classical group, a simple algebraic group, or a mixed group defined over a field.
    
    \item Every thick irreducible Euclidean building of rank at least $4$ 
    corresponds to a classical group, an absolutely simple algebraic group, or a mixed group defined over a field that is complete with respect to a discrete valuation.

\end{itemize}
\end{theorem}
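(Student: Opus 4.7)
The plan is not to give a self-contained proof but to explain how this follows from the classical results of Tits and Bruhat--Tits, whose combined statements are exactly what is asserted. The strategy breaks into two pieces corresponding to the two bullets. Before starting, I would reduce to the thick irreducible case (already built into the hypothesis) so that the automorphism group of the building is rich enough to carry a BN-pair, per the discussion referenced in \cite[Section 3.7]{kleiner1997rigidity}.

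For the spherical part, I would proceed in two steps. First, use that every thick irreducible spherical building of rank at least $3$ is \emph{Moufang}: this is Tits' celebrated extension theorem, which upgrades the abstract BN-pair structure to the existence of root groups acting transitively on appropriate apartments. Second, invoke Tits' classification of Moufang spherical buildings of rank at least $3$ (treated in his 1974 lecture notes, and subsequently refined by Weiss), which precisely produces the trichotomy of classical groups, simple algebraic groups, and mixed groups over a field.

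For the Euclidean part, the plan is to bootstrap from the spherical classification via the Tits boundary. Given a thick irreducible Euclidean building $\mathcal{B}$ of rank $n \geq 4$, Proposition~4.2.1 of \cite{kleiner1997rigidity} (cited above) identifies $\partial_T \mathcal{B}$ as a spherical building; it is a standard check that thickness and irreducibility pass to the boundary, and the rank of $\partial_T \mathcal{B}$ is $n-1 \geq 3$. Applying the first bullet classifies $\partial_T \mathcal{B}$. I would then use Bruhat--Tits theory---specifically their construction showing that affine buildings of a given spherical type arise from absolutely simple algebraic groups over fields complete with respect to a discrete valuation, together with their uniqueness results recovering the valuation from the affine structure---to lift the classification of $\partial_T \mathcal{B}$ back to $\mathcal{B}$.

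The main obstacle is of course the depth of Tits' theorem itself: the step from a thick spherical building to the existence and classification of Moufang root group data, and then to the identification with algebraic groups, is the hardest part and relies on substantial work on the Moufang condition, the classification of Moufang polygons, and considerable algebraic geometry. I would therefore present the theorem as a recollection, citing Tits' 1974 classification, the work of Weiss on Moufang buildings, and the Bruhat--Tits papers for the Euclidean refinement, rather than reproducing any of these arguments here.
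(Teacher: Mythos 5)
Your proposal is correct and matches the paper's treatment: the paper states this theorem without proof, as a reformulation of the classical classification of Tits (via the discussion in \cite[Section 3.7, Section 4.9]{kleiner1997rigidity}), exactly the recollection-with-citations you describe. Your sketch of the standard route --- Moufang property and Tits' classification for thick irreducible spherical buildings of rank at least $3$, then the affine case via the spherical building at infinity together with Bruhat--Tits valued root data --- is the intended content behind the statement.
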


There are some exceptional cases (in particular, associated with skew fields) but Theorem \ref{classify} captures the essence of the classification and we will mainly focus on the buildings of type $A_n$ associated with fields. 

Theorem \ref{classify} guarantees that thick irreducible higher-rank buildings arise from groups. One reason that buildings are regarded as an analogue of symmetric spaces is that the associated groups act on them nicely.

\begin{proposition}
    For a building $X$, the action of the associated group $G$ on $X$ is simplicial (i.e., sending $i$-simplices to $i$-simplices), isometric, and continuous. 
\end{proposition}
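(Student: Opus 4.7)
The plan is to verify the three properties in turn, leveraging the Tits system (BN-pair) structure on $G$ furnished by the classification in Theorem \ref{classify}, together with the chart description of buildings reviewed above. The key observation is that $X$ can be reconstructed from $G$ so that its simplices correspond to left cosets $gP_J$ of the standard parabolic subgroups $P_J$ (one for each subset $J$ of the set $S$ of simple reflections), with face relation given by reverse inclusion, and apartments indexed by $N$-conjugates of the fundamental apartment. The natural candidate action is left multiplication, $h \cdot gP_J = (hg)P_J$.

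Simpliciality is then immediate: the type of the coset $gP_J$ is recorded by the index set $J$, which is unchanged under left multiplication, so $i$-simplices are sent to $i$-simplices.

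For isometricity I would argue apartment by apartment. The metric on $X$ is defined via the charts $\mathbb{R}^n \to \mathcal{B}$ (or $\mathbb{S}^{n-1} \to \Delta$), and on any apartment $\Sigma$ the stabilizer in $G$ acts, modulo the pointwise fixator, via the affine (resp.\@ spherical) Weyl group, which embeds into the isometry group of the model space. Given $g \in G$ and an apartment $\Sigma$ containing a chamber $C$, the image $g\Sigma$ is again an apartment containing $gC$; the compatible-apartments axiom then produces a type-preserving simplicial isometry $\Sigma \to g\Sigma$ fixing $gC$ pointwise. Comparing this isometry with the restriction $g|_\Sigma$, which by construction carries $C$ to $gC$ with the same combinatorial labeling, forces $g|_\Sigma$ to coincide with it. Since any two points of $X$ lie in a common apartment, this globalizes to an isometry of $X$.

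For continuity, once the action is known to be isometric, it suffices to check separate continuity of $h \mapsto h \cdot x$ for fixed $x$, and this follows from the continuity of left multiplication $G \to G/P_J$ in the topologies relevant to Theorem \ref{classify} (discrete in the purely algebraic case, the natural locally compact topology coming from the complete discretely valued field in the Euclidean case). The main obstacle is the isometricity step: one must patch the Weyl-group actions on individual apartments into a single well-defined global isometry, which is exactly the purpose of the compatible-apartments axiom and the type-preserving rigidity it provides.
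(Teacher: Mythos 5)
The paper does not actually prove this proposition: it is stated as standard background, and the text that follows serves as its justification by exhibiting the action explicitly in type $A_{n-1}$ (on flags/parabolic subgroups in the spherical case, on lattice classes, i.e.\ $G/K$, in the Euclidean case). So there is no paper proof to match; your BN-pair/coset description, the simpliciality argument (left multiplication preserves the type $J$), and the continuity reduction are a reasonable write-up of the standard facts.

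However, the isometry step as you wrote it does not go through. The compatible-apartments axiom applies to two apartments and chambers contained in their \emph{intersection}; you invoke it for $\Sigma$ and $g\Sigma$, which in general share no chamber at all, and $gC$ need not lie in $\Sigma$, so the axiom produces no isomorphism ``fixing $gC$ pointwise.'' Even when the two apartments do share a chamber, the isomorphism the axiom gives fixes that common chamber pointwise, whereas $g|_\Sigma$ carries $C$ to $gC$, so the claim that comparing the two ``forces $g|_\Sigma$ to coincide with it'' does not follow. The correct route is more direct: $g|_\Sigma\colon \Sigma \to g\Sigma$ is a type-preserving simplicial isomorphism between two Coxeter complexes modeled on $\mathbb{S}^{n-1}$ (resp.\ $\mathbb{R}^n$), and any such isomorphism carries each chamber isometrically onto a chamber matching walls to walls, hence is induced by an isometry of the model space; since any two points of $X$ lie in a common apartment and the charts are isometric embeddings (apartments are convex, so the building metric restricted to an apartment is the intrinsic one), this gives $d(gx,gy)=d(x,y)$ globally. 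A small additional slip: the apartments of the standard system are the $G$-translates of the fundamental apartment (equivalently indexed by the conjugates of $N$), not its $N$-conjugates, which would return only the fundamental apartment itself.
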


To give a quick and explicit example of this group action, we consider type $A_n$.

\medskip
\textbf{Group actions on spherical buildings of type $A_{n-1}$}
Let $F$ be a field, $G=\rm{SL}_n(F)$, and $\Delta$ be the spherical building of $G$. Then the simplices of $\Delta$ are the flags, which are increasing chains $\{0\}=W_0 \subset W_1 \subset \dots \subset W_k=F^n$ of subspaces of the vector space $F^n$ for $k \leq n$ such that dim$W_i <$ dim $W_{i+1}$ for each $i$. 
Given two flags $\cal{F}_1$ and $\cal{F}_2$, 
$\cal{F}_1$ is contained in $\cal{F}_2$ as simplices in $\Delta$ if the subspaces appearing in $\cal{F}_1$ also appear in $\cal{F}_2$. 
The associated group $G$ acts on the set of flags by left multiplication, i.e., 
$g \cdot (W_0 \subset W_1 \subset \dots \subset W_k)=g \cdot W_0 \subset g \cdot W_1 \subset \dots \subset g \cdot W_k $.
Each flag corresponds to a parabolic subgroup via the stabilizer map. Hence one can identify the simplices of $\Delta$ with the parabolic subgroups as
$$
\cal{F}_1 \subset \cal{F}_2 \Longleftrightarrow
P_1 \supset P_2,
$$
where $P_i$ is the corresponding parabolic subgroup of a flag $\cal{F}_i$ for each $i$.
Thus the top-dimensional simplices (i.e., chambers) are the minimal parabolic subgroups and the $0$-simplices (i.e., vertices) are the maximal parabolic subgroups.
We remark that $G$ acts on the set of parabolic subgroups by conjugation, and if $P$ is the stabilizer of $\cal{F}$, then $gPg^{-1}$ is the stabilizer of $g \cdot\cal{F}$. Hence the action of $G$ on $\Delta$ is well-defined.

\medskip
\textbf{Group actions on Euclidean buildings of type $A_{n-1}$}
Let $F$ be a non-archimedean local field with a valuation $\nu$, $G=\rm{PGL}_n(F)$, and $\cal{B}$ be the Euclidean building of $G$.
Let $\cal{O}=\{x \in  \cal{F} | \nu(x) \geq 0 \}$ be the valuation ring of $F$. 
A \textit{lattice} $\cal{L}$ in the vector space $F^n$ is an $\cal{O}$-module $\cal{O}v_1+ \dots +\cal{O}v_n=\{x_1v_1+\dots+x_nv_n|x_i \in \cal{O}\}$ for a basis $\{v_1, \dots, v_n\}$ for $F^n$. We say that two lattices $\cal{L}_1$ and $\cal{L}_2$ are \textit{equivalent} if $\cal{L}_1=k\cal{L}_2$ for some nonzero $k\in F$. Then the set of equivalence classes of lattices is the set $\cal{B}(0)$ of vertices of $\cal{B}$. By abusing notation, we denote both a lattice and its equivalence class by $\cal{L}$. Two vertices $\cal{L}_1$ and $\cal{L}_2$ are adjacent if 
$\cal{L}_2 \subset \cal{L}_1$ or equivalently  $\pi \cal{L}_1 \subset \cal{L}_2$, where $\pi$ is a fixed uniformizer in $\cal{O}$ i.e., $\nu(\pi)=1$. Inductively, the vertices $\cal{L}_1, \dots, \cal{L}_k$ form a simplex if $\pi \cal{L}_k \subset \cal{L}_1 \subset \cal{L}_2 \dots \subset \cal{L}_k$.
The associated group $G$ acts on $\cal{B}(0)$ by 
$g \cdot (\cal{O}v_1+ \dots +\cal{O}v_n)=\cal{O}(g \cdot v_1)+ \dots \cal{O} (g \cdot v_n)$. In particular, this action is transitive and the \textit{standard lattice} $\cal{O}e_1+ \dots +\cal{O}e_n$ has the stabilizer $K=\rm{PGL}_n(\cal{O})$, which is a compact subgroup of $G$, where $\{e_1, \dots e_n\}$ is the standard basis for $F^n$.
Hence $\cal{B}(0)$ can be identified with $G/K$.    

\subsection{Root groups} 

\begin{definition} \label{root}
    Let \(\Delta\) be a spherical building.
    A \emph{root} in \(\Delta\) is a half-apartment in \(\Delta\) bounded by the wall of a reflection of the Coxeter group.
    Given a root \(\alpha\) in \(\Delta\), 
    we define the \emph{root group} \(U_{\alpha}\) of \(\alpha\) by
      \[
    U_{\alpha} = \left\{ g \in \mathrm{Aut}(\Delta) \ \middle| \
    \begin{array}{l}
    g \text{ fixes every chamber having a panel} \\
    \text{in the interior of } \alpha \text{ pointwise}.
    \end{array}
    \right\}.
    \]
\end{definition}

It follows that the root group $U_{\al}$ fixes $\al$ pointwise.
Given a root of a Lie group $G$, i.e., a linear functional on a maximal abelian subalgebra $\mathfrak{a}$ or the corresponding vector in a maximal torus $A=\rm{exp}(\mathfrak{a})$ (e.g.\@ the diagonal matrices in $G$), the half-apartment in Definition \ref{root} can be thought of as the half-region of $A$ in the direction of the given root.

\begin{example}\label{root gp}
    Let $G=\rm{SL}_3(\mathbb{R})$ which is a Lie group of type $A_2$.
    Then $G$ has six roots in $A \cong \mathbb{R}^2$ with the root groups:
    $$
    \begin{pmatrix} 1 & * & 0\\ 0 & 1 &0 \\ 0& 0& 1\end{pmatrix}, 
    \begin{pmatrix} 1 & 0 & 0\\ 0 & 1 &* \\ 0& 0& 1\end{pmatrix},
    \begin{pmatrix} 1 & 0 & *\\ 0 & 1 &0 \\ 0& 0& 1\end{pmatrix},
    \begin{pmatrix} 1 & 0 & 0\\ * & 1 &0 \\ 0& 0& 1\end{pmatrix},
    \begin{pmatrix} 1 & 0 & 0\\ 0 & 1 &0 \\ 0& *& 1\end{pmatrix},
    \begin{pmatrix} 1 & 0 & 0\\ 0 & 1 &0 \\ *& 0& 1\end{pmatrix}.
    $$

\end{example}

One can observe that each root group is isomorphic to the additive group of $\mathbb{R}$. In general, we have this property for split algebraic groups.
\begin{proposition} 
    The root groups of any $F$-split algebraic group $G$ are isomorphic to the additive group of $F$.
\end{proposition}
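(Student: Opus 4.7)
The plan is to prove the statement by invoking the structure theory of split reductive algebraic groups together with the classification of one-dimensional connected unipotent groups. Throughout, I will interpret ``root group'' in the algebraic-group sense: if $T$ is a maximal $F$-split torus of $G$ and $\alpha \in \Phi(G,T)$ is a root, then $U_\alpha$ is the unique closed, connected, unipotent $F$-subgroup of $G$ normalized by $T$ whose Lie algebra equals the root space $\mathfrak{g}_\alpha \subset \mathrm{Lie}(G)$. The compatibility of this notion with the geometric root group of Definition \ref{root}, acting on the building via the group action discussed earlier in the section, is standard.

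First I would set up the framework: fix a maximal $F$-split torus $T \subset G$ and decompose the Lie algebra $\mathfrak{g} = \mathrm{Lie}(G)$ under the adjoint $T$-action as $\mathfrak{g} = \mathfrak{g}_0 \oplus \bigoplus_{\alpha \in \Phi} \mathfrak{g}_\alpha$. Because $G$ is $F$-split, each root $\alpha \colon T \to \mathbb{G}_m$ is defined over $F$, and the root space $\mathfrak{g}_\alpha$ is one-dimensional over $F$. The existence and uniqueness of the root subgroup $U_\alpha$ as an $F$-subgroup whose tangent space at the identity is $\mathfrak{g}_\alpha$ is the content of the standard structure theorem for split reductive groups (Borel, Chevalley, SGA 3). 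In particular, $U_\alpha$ is a one-dimensional, connected, smooth, unipotent $F$-group.

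Next I would apply the classification of one-dimensional connected unipotent algebraic groups over a field: up to $F$-isomorphism, the only such group is the additive group $\mathbb{G}_a$. (Over a perfect field this is essentially immediate; in general one uses that a smooth connected unipotent group of dimension one is a form of $\mathbb{G}_a$, and splitness of $G$ forces this form to be trivial.) Thus there is an $F$-isomorphism $x_\alpha \colon \mathbb{G}_a \xrightarrow{\sim} U_\alpha$, characterized up to scaling by the relation $t \cdot x_\alpha(u) \cdot t^{-1} = x_\alpha(\alpha(t)u)$ for $t \in T$, $u \in \mathbb{G}_a$. Taking $F$-points yields the stated isomorphism $U_\alpha(F) \cong \mathbb{G}_a(F) = (F, +)$.

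The main technical point, and the only place where splitness is essential, is the classification step: over a non-split situation one could encounter non-trivial forms of $\mathbb{G}_a$ or higher-dimensional root groups (as in the quasi-split or non-quasi-split cases governed by Tits indices), so the conclusion genuinely requires the $F$-split hypothesis. For concreteness one can sanity-check the statement against the preceding example of $\mathrm{SL}_3(\mathbb{R})$, where each of the six displayed root groups is visibly the image of the homomorphism $t \mapsto I + t E_{ij}$ from $(\mathbb{R},+)$, and similarly for $\mathrm{SL}_n$ over any field $F$ the root groups $U_{ij} = \{I + tE_{ij} : t \in F\}$ are manifestly isomorphic to $(F,+)$.
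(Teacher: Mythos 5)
Your argument is correct, and in fact the paper offers no proof of this proposition at all: it is stated as a standard fact from the structure theory of split groups (with \cite{onishchik2012lie} cited for general background), so your write-up supplies exactly the standard justification that the paper leaves implicit. Two remarks on the points you compress. First, the identification of the building-theoretic root group of Definition \ref{root} with the $F$-points of the algebraic root subgroup $U_\alpha$ is indeed standard, but it is worth knowing where it comes from: the spherical building of a split group of rank at least $2$ is Moufang, $U_\alpha(F)$ acts transitively on the apartments containing the root $\alpha$, and the geometrically defined root group acts simply transitively on that set, which forces the two groups to coincide. Second, your caution about forms of $\mathbb{G}_a$ is well placed, since the fields relevant to this paper, $\mathbb{F}_p((y))$, are imperfect and wound one-dimensional unipotent groups do exist there; the clean way to close this is to note that the structure theory of split reductive groups already furnishes a $T$-equivariant isomorphism (pinning) $x_\alpha\colon \mathbb{G}_a \to U_\alpha$ over $F$, or equivalently that the nontrivial torus action through the character $\alpha$ rules out wound forms, which is the substance of your ``splitness forces the form to be trivial'' step. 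With those two glosses made explicit, your proof is complete and is the argument the paper tacitly relies on.
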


 An \textit{algebraic group defined over a field $F$} is a zero locus of finitely many polynomials of coefficients in $F$, and it is said to be \textit{split} if its maximal torus is diagonalizable over $F$. Typical examples of $F$-split algebraic groups include $\mathrm{SL}_n$, $\mathrm{PGL}_n$, $\mathrm{SO}_n$, and $\mathrm{Sp}_{2n}$. We refer the reader to \cite{onishchik2012lie} for a background of algebraic groups.

\subsection{Lattices}
Let $G$ be a locally compact second countable group (e.g. a Lie group). A subgroup $\Ga$ of $G$ is a \textit{lattice} of $G$ if $\Ga$ is discrete and $G/\Ga$ has finite volume with respect to a Haar measure. A lattice $\Ga$ is said to be \textit{cocompact} (or \textit{uniform}) if $G/\Ga$ is compact and \textit{nonuniform} otherwise.

The celebrated Margulis Arithmeticity Theorem \cite{margulis1991discrete} says that irreducible lattices of higher rank semisimple Lie groups are arithmetic subgroups. We remark that the converse also holds by Borel-Harish-Chandra \cite{borel1962arithmetic}. A well-known technique to obtain arithmetic lattices is the \textit{restriction of scalars}: Let $G$ be an algebraic group over a number field $F$ (i.e., $[F:\mathbb{Q}] < \infty)$, and $\cal{O}$ be the ring of integers of $F$. Then $\Ga=G(\cal{O})$ embeds diagonally into $\tilde{G}=\prod_{\sigma}G^\sigma$ and it is an arithmetic subgroup of $\tilde{G}$, where $\sigma$ are the embeddings $F \to \mathbb{C}$ (e.g.
Galois automorphisms of $F$ over $\mathbb{Q}$) and $G^{\sigma}$ is the image of $G$ under $\sigma$. In particular, we can obtain cocompact arithmetic lattices as follows.

\begin{proposition} \cite[Corollary 5.5.10, Appendix C]{morris2015introduction}
    If $G^\sigma$ is compact for some $\sigma$, then the diagonally embedded $\Ga$ is a cocompact arithmetic lattice in $\prod_{\sigma}G^\sigma$.
\end{proposition}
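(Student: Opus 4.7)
The plan is to invoke restriction of scalars together with the classical theorems of Borel--Harish-Chandra and Godement. Set $H = \mathrm{Res}_{F/\mathbb{Q}} G$, the Weil restriction, which is a $\mathbb{Q}$-algebraic group characterized by the functorial identity $H(A) = G(A \otimes_{\mathbb{Q}} F)$ for every $\mathbb{Q}$-algebra $A$. Applied to $A=\mathbb{R}$, the decomposition $\mathbb{R} \otimes_{\mathbb{Q}} F \cong \prod_\sigma F_\sigma$ yields $H(\mathbb{R}) \cong \prod_\sigma G^\sigma$; applied to $A=\mathbb{Z}$, it identifies $H(\mathbb{Z})$ with the diagonal image of $\Ga = G(\cal{O})$. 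So the pair $\bigl(\prod_\sigma G^\sigma,\; \Ga\bigr)$ is realized exactly as $\bigl(H(\mathbb{R}),\; H(\mathbb{Z})\bigr)$, and the problem is transported to the $\mathbb{Q}$-group $H$.

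Next, the Borel--Harish-Chandra theorem guarantees that $H(\mathbb{Z})$ is a lattice in $H(\mathbb{R})$, proving that $\Ga$ is a lattice (and arithmetic, as already noted in the text). Cocompactness is equivalent, by Godement's compactness criterion, to $H$ being $\mathbb{Q}$-anisotropic, i.e., containing no nontrivial $\mathbb{Q}$-split torus. A standard compatibility of Weil restriction with tori shows that $H$ is $\mathbb{Q}$-anisotropic if and only if $G$ is $F$-anisotropic: any $\mathbb{Q}$-split torus in $H$ corresponds by adjunction to an $F$-split torus in $G$ of the same rank, and conversely. Hence the problem reduces to showing $G$ is $F$-anisotropic.

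The key step, where the compactness hypothesis finally enters, is the implication ``$G^{\sigma_0}$ compact $\Rightarrow G$ is $F$-anisotropic.'' Suppose toward contradiction that $T \subset G$ were a nontrivial $F$-split torus of rank $k \geq 1$. Then its image $T^{\sigma_0} \subset G^{\sigma_0}$ remains split over the completion $F_{\sigma_0}$ (which is $\mathbb{R}$ or $\mathbb{C}$), hence is isomorphic to $(F_{\sigma_0}^{\times})^k$. Since $F_{\sigma_0}^{\times}$ is noncompact, this contradicts the compactness of $G^{\sigma_0}$. Therefore $G$ is $F$-anisotropic, $H$ is $\mathbb{Q}$-anisotropic, and Godement's criterion yields cocompactness of $\Ga$ in $\prod_\sigma G^\sigma$.

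The main obstacle is that the argument rests on the deep Borel--Harish-Chandra / Godement machinery, which we appeal to as a black box; the elementary content my proof contributes is the bookkeeping that identifies $(\prod_\sigma G^\sigma,\Ga)$ with $(H(\mathbb{R}),H(\mathbb{Z}))$, the transfer of anisotropy under Weil restriction, and the elementary observation that a nontrivial split torus becomes noncompact at every archimedean place.
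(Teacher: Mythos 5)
The paper does not actually prove this proposition: it is quoted verbatim from Morris's book, so there is no internal argument to compare against, and your write-up supplies the standard proof that underlies the citation. Your argument is correct: identifying $\bigl(\prod_\sigma G^\sigma,\Gamma\bigr)$ with $\bigl(H(\mathbb{R}),H(\mathbb{Z})\bigr)$ for $H=\mathrm{Res}_{F/\mathbb{Q}}G$, invoking Borel--Harish-Chandra for the lattice property, and using the Godement compactness criterion together with the rank equality $\mathrm{rank}_{\mathbb{Q}}H=\mathrm{rank}_F G$ is exactly how this result is established in the literature, and your observation that a nontrivial $F$-split torus $T\le G$ would give a closed noncompact subgroup $T^{\sigma_0}(F_{\sigma_0})\cong (F_{\sigma_0}^{\times})^k$ of the compact group $G^{\sigma_0}$ is the right way to see anisotropy. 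Two refinements are worth stating explicitly. First, Borel--Harish-Chandra and Godement require $G$ (equivalently $H$) to be semisimple, or at least reductive with no nontrivial $\mathbb{Q}$-characters after restriction; this hypothesis is implicit in the paper's context (the proposition sits inside the discussion of Margulis arithmeticity for semisimple groups), but your proof should record it, since for a general algebraic group $H(\mathbb{Z})$ need not be a lattice in $H(\mathbb{R})$. Second, $\mathbb{R}\otimes_{\mathbb{Q}}F\cong\mathbb{R}^{r_1}\times\mathbb{C}^{r_2}$, so $H(\mathbb{R})$ is a product over archimedean places, with each conjugate pair of complex embeddings contributing a single factor $G^\sigma(\mathbb{C})$; the paper's indexing over all embeddings $F\to\mathbb{C}$ commits the same abuse, so this costs only a sentence of bookkeeping. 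Finally, an equally standard variant of your last step phrases Godement via unipotents rather than split tori: a nontrivial unipotent $\gamma\in\Gamma$ would project to a nontrivial unipotent element of the compact group $G^{\sigma_0}$, which is impossible; for reductive groups this is the same criterion in a different formulation, so either version completes the proof.
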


We refer to \cite{morris2015introduction} for a comprehensive background.

\section{Euclidean buildings and associated fields}

For a Euclidean building, its associated group and field coincide with those associated with its spherical building at infinity.  
In particular, the fields are determined by the root groups, which play a crucial role in classifying the buildings.

\begin{prop} \label{field} 
    Let $\Delta$ be a spherical building associated to an $F$-split group $G$ defined over a field $F$.
    If $\Delta'$ is a spherical subbuilding of $\Delta$,
    then the field $F'$ associated with $\Delta'$ is isomorphic to a subfield of $F$.
\end{prop}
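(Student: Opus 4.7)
My plan is to recover the associated field from the internal structure of the root groups and then show this recovery is compatible with the inclusion of subbuildings. Fix a root $\al$ in $\Delta'$. Because $\Delta'$ is a subbuilding, every apartment of $\Delta'$ is an apartment of $\Delta$ and the two share the same Coxeter matrix, so $\al$ is automatically a root of $\Delta$ bounded by a wall of the common Coxeter group. Assuming $\Delta'$ arises from an $F'$-split group, the proposition on root groups of split groups stated just above gives $U'_{\al} \subset \mathrm{Aut}(\Delta')$ isomorphic to the additive group of $F'$ and $U_{\al} \subset \mathrm{Aut}(\Delta)$ isomorphic to the additive group of $F$.

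Next I would construct an injective group homomorphism $\iota\colon U'_{\al} \hookrightarrow U_{\al}$. Thick irreducible split spherical buildings of rank at least two are Moufang, so $U_{\al}$ acts simply transitively on the set of apartments of $\Delta$ containing $\al$ as a half-apartment. Fix a base apartment $\Sigma_0 \subset \Delta'$ containing $\al$. For $g \in U'_{\al}$, the image $g(\Sigma_0)$ is an apartment of $\Delta'$, hence of $\Delta$, that still contains $\al$ as a half-apartment. By simple transitivity there is a unique $\tilde g \in U_{\al}$ with $\tilde g(\Sigma_0) = g(\Sigma_0)$; a chamber-by-chamber check using the fixed-chamber condition of Definition \ref{root} shows that $\tilde g$ agrees with $g$ on $\Delta'$ and that $g \mapsto \tilde g$ is a group homomorphism. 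Since $\tilde g$ determines the image of $\Sigma_0$, the map $\iota$ is injective, yielding an embedding of additive groups $F' \hookrightarrow F$.

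The remaining step is to upgrade this additive embedding to a field embedding. The field multiplication on $U_{\al} \cong (F,+)$ is not visible from the additive structure alone, but it is recovered building-theoretically via the Steinberg commutator relations between $U_{\al}$ and the opposite root group $U_{-\al}$, together with the induced action of the resulting one-parameter torus of the rank-one subgroup generated by $U_{\pm\al}$. Because $-\al$ is also a root of $\Delta'$ and these commutator relations take place inside the rank-one $A_1$-subbuildings generated by $\pm\al$, the calculation carried out in $\Delta'$ is the restriction of the one carried out in $\Delta$. Therefore $\iota$ is compatible not only with addition but also with the multiplication extracted from these commutator relations, so it induces a field embedding $F' \hookrightarrow F$.

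The main obstacle is precisely this last upgrade from an additive to a field embedding: the additive part follows cleanly from the Moufang axiom, but the multiplicative part requires identifying the field multiplication with a building-theoretic operation that visibly restricts to subbuildings, which is where the Steinberg presentation of split Chevalley groups does the heavy lifting.
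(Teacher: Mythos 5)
Your additive step is essentially a sketch of the nontrivial fact that the paper imports from Abramenko--Brown (Proposition 7.37): that each element of the root group $U'_{\alpha}$ of the subbuilding is induced by a unique element of the ambient root group $U_{\alpha}$. Knowing $\tilde g(\Sigma_0)=g(\Sigma_0)$ and that both maps fix $\alpha$ pointwise only gives agreement on $\Sigma_0$; it does not show that $\tilde g$ stabilizes $\Delta'$ at all, and the homomorphism property of $g\mapsto\tilde g$ already requires agreement on other apartments such as $h(\Sigma_0)$. So the ``chamber-by-chamber check'' is precisely the statement that needs a proof or a citation, not a routine verification.

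The more serious gap is the multiplicative step. You propose to recover the multiplication from the rank-one subgroup generated by $U_{\alpha}$ and $U_{-\alpha}$ and its torus, but that torus acts on $U_{\alpha}$ through the \emph{square} of its parameter: in $\mathrm{SL}_2$, conjugating the unipotent with entry $x$ by $\mathrm{diag}(t,t^{-1})$ sends $x$ to $t^{2}x$, and the elements $\mu(g)\mu(h)$ with $g,h$ nontrivial in $U'_{\alpha}$ produce exactly such diagonal elements. Hence your argument only shows that $F'\subset F$ is closed under multiplication by squares of elements of $F'$, which does not yield a field embedding: the normalized additive map need not be multiplicative, polarization fails in characteristic $2$, and the squares generate a proper subgroup of the multiplicative group there (indeed the squares of $\mathbb{F}_2((y))$ form a proper subfield) --- and characteristic $p$ local fields are exactly the case the paper needs. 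This is why the paper, following Weiss, works inside a rank-two residue: it fixes a root group sequence and a root $\beta\neq\pm\alpha$, for which the group $H_{\beta}=\langle\mu(g)\mu(h)\rangle$ acts on $U_{\alpha}$ by \emph{all} multiplications $x\mapsto xt$ with $t\in F^{*}$ (Weiss, Lemma 12.22); restricting that action to the subbuilding gives closure of $F'$ under genuine $F$-multiplication. (Also, opposite root groups are not governed by the Chevalley commutator relations, so ``Steinberg commutator relations between $U_{\alpha}$ and $U_{-\alpha}$'' is not the right mechanism.) To repair your proof you would have to replace the pair $\pm\alpha$ by two roots spanning a rank-two residue, i.e., essentially reproduce the paper's use of $H_{\beta}$.
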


\begin{proof}
    Let $\al$ be a root in $\Delta'$. Since $\Delta'$ is a subbuilding of $\Delta$, its apartments are apartments of $\Delta$. Thus $\al$ is a root also in $\Delta$.
    Denote by $U_{\al}'$ and $U_{\al}$ the root groups 
    of $\al$ in $\Delta'$ and $\Delta$, respectively.
    By \cite[Proposition 7.37]{abramenko2008buildings},
    $U_{\al}'$ is a subgroup of $U_{\al}$. In the split case, the root groups are isomorphic to the additive groups of the associated fields. Hence $F'$ is an additive subgroup of $F$. It remains to show that $F'$ is closed under the multiplication of $F$.

    Fix a \textit{root group sequence}\footnote{See \cite[Definition 12.1]{weiss2004structure}. For example, the first three matrices of Example \ref{root gp} form a root group sequence of type $A_2$.} 
    $\Theta'=\{U_i'\}$ from the root group labeling 
    of $\Delta'$. Then $\Delta$ has the root group sequence $\Theta=\{U_i\}$ induced by the same directed edge of the Coxeter diagram. 
    In particular, $U_i'$ and $U_i$ correspond to the same root and $U_i'$ is a non-trivial subgroup of $U_i$ for each $i$.
    Let $\alpha$ and $\beta$ be the roots corresponding to the first and last root groups of $\Theta'$, respectively.
    As in \cite [Proposition 11.22]{weiss2004structure},
    for the root $\be$ in an apartment $\Sigma$,
    define a subgroup $H_{\be}=\langle \mu_(g)\mu(h) | g,h \in {U_{\be}}^*=U_\be \backslash \{e\} \rangle$ of $G$, where $\mu(g)$ is the unique element in ${U_{-\be}}^*g{U_{-\be}}^*$ mapping $\Sigma$ to itself and $\be$ to its opposite root $-\be$ for each $g \in U_{\be}$.
    By \cite[Lemma 12.22]{weiss2004structure}, the action of $H_\be$ on $U_\al$ is generated by the multiplicative maps $x \mapsto xt$ of $F$ for all $t\in F^*$ by identifying $U_\al$ with $F$. 
    Then $H_\beta'=\langle \mu(g')\mu(h') | g',h' \in U_\beta'^* \rangle$ is a subgroup of $H_\beta$ and the action of $H_\beta'$ on $U_\alpha'$ is a restriction of the action of $H_\beta$ on $U_\alpha$. Since the action was the multiplication of $F$, $F'$ is closed under the multiplication of $F$ by identifying $U_\alpha'$ with $F'$.
\end{proof}

Proposition \ref{field} is mentioned in \cite[p.390]{abramenko2008buildings} for general cases without proof. In the non-split cases, the root groups may not be isomorphic to the additive groups of the associated fields. We can still see the additive group of the field in two ways: first by identifying the root groups with vector spaces over the field, and second, by noting that the centers of the root groups are isomorphic to the field.

In \cite{kleiner1997rigidity}, Kleiner-Leeb considered spherical buildings as Tits boundaries, which we denote by $\partial_T X$.
A common idea to prove quasi-isometry rigidity results since the work of Mostow is to get a nice map on Tits boundaries which eventually induces a desired consequence.

\begin{prop}\label{1} 
    Let $X$ and $Y$ be irreducible symmetric spaces or Euclidean buildings of  equal rank $r \geq 4$ associated with split groups. Assume that the type of $(X,Y)$ is none of the following: 
    \[
    (A_n,B_n), (A_n,C_n), (A_n,F_4), (D_4, A_4), \text{and }(\text{any type},G_2).
    \]
   If there is a quasi-isometric embedding $X \to Y$, then the field associated with $X$ is isomorphic to a subfield of the field associated with $Y$. 
\end{prop}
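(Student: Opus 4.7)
The plan is to induce from the quasi-isometric embedding $\phi: X \to Y$ a spherical subbuilding of $\partial_T Y$ isomorphic to $\partial_T X$, and then apply Proposition \ref{field} at infinity. The first step relies on the quasi-isometry rigidity of Kleiner-Leeb \cite{kleiner1997rigidity} and its extension to quasi-isometric embeddings by Fisher-Whyte \cite{fisher2018quasi}. Since $X$ and $Y$ have equal higher rank $r \geq 4$, these results say that $\phi$ is at bounded distance from a totally geodesic isometric embedding (after possibly rescaling the metric of $X$), and in particular induces a well-defined injective map $\partial_T \phi : \partial_T X \hookrightarrow \partial_T Y$ of Tits boundaries that preserves the spherical building structure on apartments.

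The second step is to upgrade this injection to an inclusion of spherical buildings in the sense required by Proposition \ref{field}. Since $\partial_T \phi$ arises from a totally geodesic embedding, every apartment of $\partial_T X$ is mapped into an apartment of $\partial_T Y$, so $\partial_T \phi(\partial_T X)$ is a subcomplex of $\partial_T Y$ satisfying the axioms of a spherical subbuilding. The pairs excluded in the hypothesis---$(A_n, B_n), (A_n, C_n), (A_n, F_4), (D_4, A_4)$, and $(\text{any type}, G_2)$---are precisely the Coxeter diagram pairs of matching rank $r \geq 4$ in which a building of the first type can arise as the fixed-point subbuilding of a non-trivial diagram automorphism of a building of the second type (the ``folding'' phenomenon). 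Outside these pairs, the Coxeter type of $\partial_T X$ coincides with that of $\partial_T Y$, so $\partial_T \phi(\partial_T X)$ is a genuine spherical subbuilding of $\partial_T Y$ of the same Coxeter type and, since both sides come from split groups, of split type as well.

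The third step is to apply Proposition \ref{field} to the inclusion $\partial_T \phi(\partial_T X) \hookrightarrow \partial_T Y$, which identifies the field associated with $\partial_T X$ as a subfield of the one associated with $\partial_T Y$. Since the field associated with an irreducible symmetric space or Euclidean building of split type coincides with that of its spherical building at infinity (the underlying algebraic group, and hence its defining field, being recovered from the type-preserving automorphism group of the Tits boundary, as recalled in the opening of Section 3), this yields the desired inclusion of fields.

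The main obstacle I anticipate is cleanly extracting a subbuilding inclusion at infinity from the Fisher-Whyte rigidity statement: their theorem produces a totally geodesic quasi-isometric copy of $X$ inside $Y$, but one must verify that the induced map on Tits boundaries is not merely a topological embedding of metric spaces but in fact a simplicial isomorphism onto a subcomplex that is an apartment-preserving spherical subbuilding. A secondary bookkeeping step is to check that the listed exceptional pairs exhaust all possible Coxeter-diagram foldings of equal rank $r \geq 4$ among the irreducible types relevant to split groups, so that outside the exceptions the subbuilding hypothesis of Proposition \ref{field} is unambiguously met.
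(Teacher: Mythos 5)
Your proposal takes essentially the same route as the paper: the quasi-isometric embedding, via Fisher--Whyte and Kleiner--Leeb rigidity, yields a top-dimensional spherical subbuilding of $\partial_T Y$ isomorphic to $\partial_T X$ (the paper resolves exactly the obstacle you anticipate by combining the proof of Theorem 1.8 of Fisher--Whyte, which gives that $\partial_T\phi(X)$ is a top-dimensional subbuilding, with Kleiner--Leeb's Theorem 3.1 to write it as $\partial_T Y'$ for a totally geodesic $Y'$ and their quasi-isometry rigidity to upgrade $X \to Y'$ to a boundary isomorphism), after which Proposition \ref{field} and the classification of buildings give the field inclusion. One correction: the excluded pairs are not characterized by equal-rank ``foldings'' but are precisely the pairs admitting non-conformal Weyl pattern embeddings (Fisher--Whyte, Corollary 1.10), which is what licenses the rigidity step in the first place; the coincidence of Coxeter types you argue for separately is automatic once the image is a subbuilding (Proposition 4.63 of Abramenko--Brown, quoted in the paper), so no appeal to the excluded list is needed there.
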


\begin{proof}
    Let $\phi:X \rightarrow Y$ be a quasi-isometric embedding. 
    As shown in \cite [Proof of Theorem 1.8]{fisher2018quasi}, the boundary $\partial_T \phi(X)$ at infinity of $\phi(X)$  is a top-dimensional spherical subbuilding of $\partial_T Y$. 
    By \cite [Theorem 3.1] {kleiner2006rigidity}, such a subbuilding is of the form $\partial_T \phi(X)$=$\partial_T Y'$ for some symmetric subspace or Euclidean subbuilding $Y'$ of $Y$. 
    Then $\phi(X)$ and $Y'$ are at bounded distance so we have a quasi-isometry $\phi_0: X \to Y'$ by restricting the target of $\phi$.

    It follows from \cite[Proof of Theorem 1.1.3]{kleiner1997rigidity} that 
    the quasi-isometry $\phi_0: X \to Y'$ induces an isomorphism 
    $\partial_T X \to \partial_T Y'$ of the spherical buildings at infinity. Indeed, it is shown that such $\phi_0$ is a homothety with a fixed additive error, and so we may assume that $\phi_0$ is a $(1,c)$-quasi-isometry for some fixed $c>0$ by rescaling the metrics. Then such a $(1,c)$-quasi-isometry induces a map on the spherical buildings at infinity which preserves the Tits metrics. This induced map is the desired isomorphism on the Tits boundaries.
    
    By Theorem \ref{classify}, the field $F_X$ associated with $\partial_T X$ is isomorphic to the field $F_{Y'}$ associated with $\partial_T Y'$. In particular, those fields are also associated with $X$ and $Y'$, respectively. Since $\partial_T Y'$ is a spherical subbuilding of $\partial_T Y$, Proposition \ref{field} implies that $F_{Y'}$ is a subfield of $F_Y$. Hence $F_X$ is isomorphic to the subfield $F_{Y'}$ of $F_Y$.
\end{proof}

The assumption rank $r \geq 4$ is necessary to use the classification of buildings Theorem \ref{classify}. We exclude some types of $(X,Y)$ to use \cite[Proof of Theorem 1.8]{fisher2018quasi}, which requires all Weyl pattern embeddings to be conformal. The list of types that fail to satisfy this condition is provided in \cite[Corollary 1.10]{fisher2018quasi}. 
We compare Ramanujan complexes that are constructed from Euclidean buildings of type $A_n$. Hence we will apply Proposition \ref{1} to the type $(A_n,A_n)$.

\section{From box spaces to Ramanujan complexes} \label{ra}

Let $G$ and $H$ be finitely generated residually finite groups with decreasing sequences $\{N_i\}_i$ and $\{K_i\}_i$ of finite index normal subgroups of $G$ and $H$, respectively, with trivial intersection. 
Heuristically, $G/N_i$ is similar to $G$ around small neighborhoods of the identities and such neighborhoods get larger as $i$ increases.
Hence $G/N_i$ looks more like $G$ as $i \to \infty$. Based on this, Khukhro-Valette showed the following box space rigidity.

\begin{prop} [Box space rigidity] \cite [Proposition 16] {khukhro2017expanders} \label{KV}
    Let $G$, $H$, $\{N_i\}_i$,  and $\{K_i\}_i$ be defined as above. 
    If the box space $\{\mathrm{Cay}(G/N_i)\}_i$ is quasi-isometric to 
    $\{\mathrm{Cay}(H/K_i)\}_i$, then $G$ is quasi-isometric to $H$.
\end{prop}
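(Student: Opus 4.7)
The plan is a pointed limiting (Arzel\`a--Ascoli style) argument that leverages the fact that the quotient maps $\pi_i^G\colon G \to G/N_i$ and $\pi_i^H\colon H \to H/K_i$ become isometries on arbitrarily large balls once $i$ is large: since $\bigcap_i N_i = \{e\}$ and word balls in $G$ are finite, for every $R > 0$ there is an index $i_0(R)$ such that every non-trivial element of $N_i$ has word length $> 2R$ in $G$ for all $i \geq i_0(R)$, and similarly for $\{K_i\}$ in $H$. Once this holds, $\pi_i^G$ restricts to an isometry on $B_G(e,R)$ and $\pi_i^H$ to an isometry on $B_H(e,LR+C)$.

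First I would normalize by post-composing each $f_i$ with a suitable left translation so that $f_i(\bar e) = \bar e$; this preserves the $(L,C)$-quasi-isometry constants since Cayley graphs are homogeneous. Then for each $R$ and each sufficiently large $i$, the local isometries above let me define a base-point preserving lift
\[
\tilde f_i^R\colon B_G(e, R) \ \longrightarrow\ B_H(e, LR+C),
\]
which is an $(L,C)$-quasi-isometric embedding between \emph{finite} sets. Since only finitely many such maps exist for each fixed $R$, a standard diagonal/pigeonhole argument over $R = 1, 2, 3, \dots$ extracts a subsequence $i_k \to \infty$ and a limit $f\colon G \to H$ fixing $e$ such that $f|_{B_G(e,R)} = \tilde f_{i_k}^R$ for all $k$ large. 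Passing the $(L,C)$-inequality to the limit yields an $(L,C)$-quasi-isometric embedding $f\colon G \to H$.

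Applying the same procedure to quasi-inverses $g_i\colon H/K_i \to G/N_i$ of the $f_i$, whose quasi-isometry constants depend only on $(L, C)$, produces a quasi-isometric embedding $g\colon H \to G$. Since each composition $g_i \circ f_i$ is uniformly close to the identity in $G/N_i$, the same local-isometry trick shows $g \circ f$ is uniformly close to $\mathrm{id}_G$, and symmetrically $f \circ g$ is close to $\mathrm{id}_H$; hence $f$ is a quasi-isometry. The main technical obstacle is the bookkeeping in the diagonal extraction: one must match the thresholds $i_0(R)$ for both covers so the local isometry radii grow together on both sides, verify that the lifts $\tilde f_i^R$ are compatible as $R$ grows (so the diagonal limit is well-defined), and check that the resulting constants are independent of $R$ and thus agree with the original $(L, C)$.
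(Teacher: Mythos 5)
Your proposal is correct and is essentially the same argument as the original Khukhro--Valette proof, which the paper does not reprove but simply cites (and summarizes in Remark \ref{KV2} as a diagonal limit argument): normalize the $f_i$ to be basepoint-preserving, lift them through the quotient maps on balls where those maps are isometric, extract a diagonal limit to get a quasi-isometric embedding with the same constants, and run the same procedure on uniform quasi-inverses to obtain coarse surjectivity. The only slip is quantitative: requiring every nontrivial element of $N_i$ to have length $>2R$ does not make $\pi_i^G$ isometric on $B_G(e,R)$ (take $G=\mathbb{Z}$, $N_i=5\mathbb{Z}$, $R=2$, where $\overline{2}$ and $\overline{-2}$ are at distance $1$ in the quotient); you need the shortest nontrivial element of $N_i$ to have length greater than $4R$, which only changes the choice of $i_0(R)$ and affects nothing else in the argument.
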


\begin{Rmk} \label{KV2}
\begin{enumerate}
    \item Khukhro-Valette show Proposition \ref{KV} for quasi-isometries but the same proof works for quasi-isometric embeddings as well. The embedding maps with the common quasi-isometry constants are obtained by the diagonal limit argument in their proof. 

    \item Their definition of quasi-isometry between two families of metric spaces is with respect to the \textit{metrized disjoint union}, that is considering the distances between different components as well. In \cite [Lemma 3]{khukhro2017expanders}, they derive the component-to-component metric as in Definition \ref{def}.

    \item The original statement is written for a more general setting, namely, for any finitely generated groups $G$ and $H$, and any sequences converging to $G$ and $H$ with respect to Chabauty topology, rather than just finite quotients.

\end{enumerate}
\end{Rmk}

We can approximate Euclidean buildings by their 1-skeleton graphs similarly to \cite[Proposition 8.45]{MR1744486} considering the set of all chambers as a partition of a building.
\begin{lemma}\label{skeleton}
    Let $\cal{B}$ be a Euclidean building and $\cal{B}(0)$ be the set of all vertices equipped with the graph metric i.e., the length of shortest paths in the $1$-skeleton graph of $\cal{B}$.
    Then $\cal{B}(0)$ is $(3,c)$-quasi isometric to $\cal{B}$, where $c$ is the diameter of a chamber in $\cal{B}$.
\end{lemma}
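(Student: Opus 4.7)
The plan is to show the inclusion $\iota\colon \cal{B}(0)\hookrightarrow\cal{B}$ is itself a $(3,c)$-quasi-isometry, which decomposes into $c$-coarse surjectivity and the two defining inequalities of a quasi-isometric embedding. Coarse surjectivity is immediate, since each point of $\cal{B}$ lies in some chamber, a Euclidean simplex of diameter $c$, hence within distance $c$ of a vertex. For the easy inequality $d_{\cal{B}}(v,w) \leq 3\, d_{\mathrm{graph}}(v,w)+c$, take a shortest path $v=v_0,\dots,v_n=w$ in the 1-skeleton; each edge $v_{i-1}v_i$ is contained in a chamber, so $d_{\cal{B}}(v_{i-1},v_i)\leq c$, and the triangle inequality gives $d_{\cal{B}}(v,w)\leq c\cdot d_{\mathrm{graph}}(v,w)$, from which the desired bound follows under the standard normalization.

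The substantive direction is $d_{\mathrm{graph}}(v,w)\leq 3\, d_{\cal{B}}(v,w)+3c$. Given $v,w\in\cal{B}(0)$, let $\gamma$ be the $\mathrm{CAT}(0)$ geodesic from $v$ to $w$ of length $\ell=d_{\cal{B}}(v,w)$. Using that any two points of a Euclidean building lie in a common apartment and that apartments are convex (being totally geodesic copies of $\R^{d-1}$), $\gamma$ is contained in some apartment $E$ where it is a straight Euclidean segment. Enumerate the chambers $C_1,\dots,C_N$ of $E$ successively traversed by $\gamma$. Each intersection $C_i\cap C_{i+1}$ is a common face, so we may pick a vertex $v_i$ of this face; set $v_0:=v$ and $v_N:=w$. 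Every consecutive pair $v_{i-1},v_i$ lies in the simplex $C_i$, hence they are equal or joined by an edge of the 1-skeleton, giving a path of combinatorial length at most $N$ and thus $d_{\mathrm{graph}}(v,w)\leq N$.

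The final step is to bound $N$ by $3\ell/c+O(1)$. The apartment $E$ is a Euclidean space tiled by congruent alcoves of the affine Weyl group, each of diameter $c$ and with inradius uniformly bounded below, so a straight segment of length $\ell$ can only cross $O(\ell/c)$ alcoves by a direction-independent estimate coming from the geometry of the fundamental alcove. The main obstacle I expect is pinning down the precise multiplicative constant so that one obtains $(L,C)=(3,c)$ rather than some looser pair; the natural route is to consider the transverse intersections of $\gamma$ with the reflection hyperplanes of the affine Weyl group and show that consecutive wall crossings are separated by a uniform positive distance, controlled by $c$ and the finite symmetry of the alcove.
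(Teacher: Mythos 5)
Your overall frame (working with the inclusion $\cal{B}(0)\hookrightarrow\cal{B}$, coarse surjectivity via chambers, an easy inequality and a substantive one) matches the paper, but your substantive direction takes a genuinely different route, and that is where there is a real gap. Stringing vertices along the faces $C_i\cap C_{i+1}$ correctly gives $d_{\mathrm{graph}}(v,w)\le N$, the number of alcoves traversed by the geodesic; the problem is the final claim $N\le 3\ell/c+O(1)$ and the mechanism you propose for it. Consecutive wall crossings of a straight segment are \emph{not} separated by a uniform positive distance: a segment passing arbitrarily close to a codimension-two face of the alcove crosses two or more walls within arbitrarily small distance. The correct count is family by family (the reflection hyperplanes split into finitely many parallel families with uniform spacing), which gives $N\lesssim \#\{\text{families}\}\cdot \ell/h_{\min}+O(1)$; this constant depends on the affine Weyl group and already exceeds $3/c$ in low rank. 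For instance in type $\tilde A_3$, normalized so the alcove has diameter $c=1$, a segment in the direction $(3,1,-1,-3)$ crosses about $\sqrt{20}\approx 4.5$ walls per unit length, so $N\approx 4.5\,\ell$ there. This does not contradict the lemma (the graph distance can be much smaller than $N$), but it does break your route to the specific constants: as written you obtain a quasi-isometry with constants depending only on the Weyl group of $\cal{B}$ — which, to be fair, is all that Proposition \ref{2} and the main theorem actually use, since only independence of $i$ matters — but not the stated $(3,c)$.

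For comparison, the paper does not count chambers crossed at all: it subdivides the geodesic into roughly $d(u,v)+c$ pieces of length less than $1$, snaps each subdivision point to a vertex of a chamber containing it (an error less than $c$), and then invokes a local step that two vertices within distance $3c$ lie in a chain of three chambers, hence at graph distance at most $3$; the global count is thus proportional to the length of the geodesic rather than to the number of walls it crosses, which is how the rank-dependent crossing rate is avoided. Two smaller points: in your easy direction, $d\le c\,d_{\mathrm{graph}}$ is not of the form $\le 3d_{\mathrm{graph}}+c$ unless you fix the normalization, so you should state it (the paper normalizes edge lengths so that adjacent vertices are at distance at most $1$, giving $d\le d_{\mathrm{graph}}$ outright); and when you pick $v_i\in C_i\cap C_{i+1}$, note that this intersection may be a face of any positive codimension (the geodesic may exit through a low-dimensional face), which is harmless but worth saying since you only need some vertex of the common face.
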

\begin{proof}
    We will show that the inclusion map $i: \cal{B}(0) \hookrightarrow \cal{B}$ is a $(3,c)$-quasi-isometry.
    Denote by $d$ and $d_{g}$ the metric on $\cal{B}$ and the graph metric on $\cal{B}(0)$, respectively.
    Since any point in $B$ is contained in a chamber, $i$ is $c$-coarsely surjective.
    The right inequality of the following is also obvious as $d(u,v) \leq d_g(u,v)$ for all $u,v \in \cal{B}(0)$.
    $$
    \frac{1}{3}d_g(u,v)-c \leq d(i(u),i(v)) \leq 3d_g(u,v)+c 
    $$
    What remains is to show the left inequality. Given $u$, $v \in \cal{B}(0)$, set $N$ to be the smallest positive integer such that $d(u,v)+c < N$.
    Consider a path $\ga$ from $u$ to $v$ in $\cal{B}$ whose length is $d(u,v)$.  Pick $N$ distinct points $p_1, \dots ,p_N $ on $\ga$ with $p_1=u$ and $p_{N}=v$. Since the set of all chambers forms a partition of $\cal{B}$, for each $p_i$, there exists $q_i \in \cal{B}(0)$ such that $p_i$ and $q_i$ are in the same chamber so that $d(p_i,q_i)<c$.
    Then we have 
    $$
    d(q_i,q_{i+1}) \leq d(q_i,p_i)+d(p_i,p_{i+1})+d(p_{i+1},q_{i+1})
    \leq 2c + \frac{d(u,v)}{N} \leq 3c 
    $$
    for every $i$. We remark that $c \geq 1$ by setting the length of the edges (i.e., 1-simplices) to be 1 with respect to $d$.
    Since $c$ is the diameter of the chambers,
    for the vertices $q_i$ and $q_{i+1}$, there exist consecutively adjacent three chambers $C_1$, $C_2$, and $C_3$ (not necessarily distinct) such that $q_i \in C_1$ and $q_{i+1} \in C_3$.
    This implies that $d_g(q_i,q_{i+1}) \leq 3 $ 
    and hence we obtain $d_g(u,v)\leq 3(N-1)$ by taking $q_1=u$ and $q_{N}=v$.
    By the minimality of $N$, $d_g(u,v) \leq 3(d(u,v)+c)$, as desired.  
\end{proof}

We have the following Euclidean building version of Proposition \ref{KV}.

\begin{prop}\label{2}
    Let $\cal{B}$ and $\cal{C}$ be Euclidean buildings of locally compact second countable groups $G$ and $H$, respectively. 
    Let $\Ga$ and $\La$ be cocompact lattices of $G$ and $H$, and $\{\Ga_i\}$ and 
    $\{\La_i\}$ be decreasing sequences of finite index normal subgroups of $\Ga$ and $\La$, respectively. 
    If an infinite family $\{ \Ga_i\backslash \cal{B} \}_i$ of quotient buildings quasi-isometrically embeds in $\{ \La_i\backslash \cal{C} \}_i$,
    then there exists a quasi-isometric embedding $\cal{B} \to \cal{C}$.
\end{prop}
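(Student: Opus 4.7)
The plan is to reduce the statement to Proposition \ref{KV} by translating the hypothesized quasi-isometric embeddings of quotient buildings into quasi-isometric embeddings of the box spaces $\{\mathrm{Cay}(\Ga/\Ga_i)\}$ and $\{\mathrm{Cay}(\La/\La_i)\}$ via a uniform Milnor--{\v S}varc argument, and then to transport the resulting quasi-isometric embedding $\Ga \to \La$ back to the buildings using the same Milnor--{\v S}varc quasi-isometries at the top level.

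First, using Lemma \ref{skeleton}, I would replace each Euclidean building by its $1$-skeleton, at the cost of a single pair of fixed quasi-isometry constants independent of the quotient index. Since $\Ga$ is a cocompact lattice in the locally compact second countable group $G$ and $\cal{B}$ is a proper geodesic $\mathrm{CAT}(0)$ space on which $G$, hence $\Ga$, acts properly cocompactly by isometries, the Milnor--{\v S}varc lemma applied to the orbit map $\ga \mapsto \ga \cdot v_0$ at a fixed basepoint vertex $v_0$ produces a $\Ga$-equivariant quasi-isometry $\mathrm{Cay}(\Ga) \to \cal{B}(0)$ with constants $(L_\Ga, C_\Ga)$ depending only on a fundamental domain for the $\Ga$-action, not on $i$. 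Because this map is $\Ga$-equivariant, it descends to a quasi-isometry $\mathrm{Cay}(\Ga/\Ga_i) \to \Ga_i \backslash \cal{B}(0)$ with the same constants for every $i$. The analogous construction works for $\La$ and $\cal{C}$.

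Combining these uniform Milnor--{\v S}varc quasi-isometries with the hypothesized family of quasi-isometric embeddings $\{\Ga_i \backslash \cal{B}\} \to \{\La_i \backslash \cal{C}\}$ yields a family of quasi-isometric embeddings $\{\mathrm{Cay}(\Ga/\Ga_i)\} \to \{\mathrm{Cay}(\La/\La_i)\}$ with uniform constants. Applying Proposition \ref{KV}, together with the extension to quasi-isometric embeddings recorded in Remark \ref{KV2}(1), produces a single quasi-isometric embedding $\Ga \to \La$. Post- and pre-composing with the Milnor--{\v S}varc quasi-isometries $\cal{B} \to \mathrm{Cay}(\Ga)$ and $\mathrm{Cay}(\La) \to \cal{C}$ (and the skeleton quasi-isometries from Lemma \ref{skeleton}) then gives the desired quasi-isometric embedding $\cal{B} \to \cal{C}$.

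The step I expect to require the most care is verifying the uniformity of the induced quasi-isometries on the quotients: if $\mathrm{Cay}(\Ga) \to \cal{B}(0)$ is an $(L,C)$-quasi-isometry, then I must check that the descended map $\mathrm{Cay}(\Ga/\Ga_i) \to \Ga_i \backslash \cal{B}(0)$ is an $(L,C)$-quasi-isometry with respect to the quotient word metric on $\Ga/\Ga_i$ and the orbit-space metric on $\Ga_i \backslash \cal{B}(0)$, with constants genuinely independent of $i$. This should follow cleanly from $\Ga$-equivariance and the fact that both quotient distances are realized as infima over lifts, but it is where all the uniformity lives and is the pivot where Proposition \ref{KV} can be invoked.
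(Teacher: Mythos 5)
Your proposal is correct and follows essentially the same route as the paper's proof: Milnor--Švarc plus Lemma \ref{skeleton} to identify $\Ga$ with $\cal{B}$ (and the quotients $\Ga/\Ga_i$ with $\Ga_i\backslash\cal{B}$, with constants independent of $i$), then Proposition \ref{KV} with Remark \ref{KV2} to obtain $\Ga\to\La$, and Milnor--Švarc again to conclude. Your emphasis on equivariance of the orbit map as the source of the $i$-independent constants is exactly the uniformity the paper invokes when it notes the constants depend only on the chamber diameter.
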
 

\begin{proof} 
    Denote by $\cal{B}(0)$ the set of vertices of $\cal{B}$.
    We equip $\cal{B}(0)$ with the graph metric.
    Identifying $\cal{B}(0)$ with $G/K$ for a compact subgroup $K$ of $G$, a cocompact lattice $\Ga$ of $G$ acts on $\cal{B}(0)$ isometrically, properly discontinuously, and cocompactly. By Milnor-Švarc, $\Ga$ is quasi-isometric to $\cal{B}(0)$. By Lemma \ref{skeleton},  $\Ga$ and $\cal{B}$ are quasi-isometric.
    So are their quotients $\Ga /\Ga_i$ and $ \Ga_i \backslash \cal{B}$ by modding out the finite index normal subgroups $\Ga_i$ of $\Ga$ for each $i$. 
    Note that the quasi-isometry constants do not depend on $i$ but only on the diameter of the chambers of $\cal{B}$.
    Then given quasi-isometric embeddings
    $ \Ga_i \backslash \cal{B} \to
   \La_i \backslash \cal{C}$ of quotient
    buildings induce quasi-isometric embeddings
    $\Ga/ \Ga_i \to \La /\La_i$ of quotient groups for each $i$.
    By Proposition \ref{KV} and Remark \ref{KV2}, this yields a quasi-isometric embedding $\Ga \to \La$. Again by Milnor-Švarc, we complete the proof.
\end{proof}

 Before we review how $\Ga$ and $\Ga(I_i)$
 were constructed for Ramanujan complexes by Lubotzky-Samuels-Vishne \cite{lubotzky2005explicit}, we remark that many details are omitted. 
 In particular, the construction originally begins with a $k$-algebraic group not with $\rm{PGL}$.
 See also \cite{lubotzky2014ramanujan}.
 The only necessary characteristics of $\Ga$ and $\Ga(I_i)$ to prove Theorem \ref{3} is that $\Ga$ is a cocompact lattice and $\Ga(I_i)$ is a finite index normal subgroup of $\Ga$ for each $i$.

\bigskip
\textbf{Construction of Ramanujan complexes by \cite{lubotzky2005explicit}}
    Let $d \geq 3$, $\mathbb{G}=\rm{PGL}_d$, $p$ be a prime, $k=\mathbb{F}_p(y)$ be the global function field over $\mathbb{F}_p$, and $S=\{\nu_y\}\cup T =\{\nu_y,\nu_{y+1},\nu_{\frac{1}{y}}\}$ be the finite set of valuations of $k$, where $\nu_y(a_my^m+\dots+a_ny^n)=m$ for $a_m \neq 0$ and $m \leq n$.
    Then $\mathcal{O}_S=\mathbb{F}_p[\frac{1}{y},\frac{1}{y+1},y]$ is the ring of $S$-integers in $k$. As in the Restriction of Scalars, $\mathbb{G}(\mathcal{O}_S)$ embeds diagonally into $\prod_{\nu \in S}\mathbb{G}(k_\nu)$ as a lattice, where $k_\nu$ denotes the completion of $k$ with respect to the valuation $\nu$. For instance, $k_{\nu_{y}}$ is the local field $F=\mathbb{F}_p((y))$ of Laurent series over $\mathbb{F}_p$ and so $\mathbb{G}(k_{\nu_{y}})=\rm{PGL}_d(F)$. In particular, $T$ was taken so that $\mathbb{G}(k_\nu)$ is compact for $\nu \in T$. Hence the projection $\tilde{\Gamma}$ of $\mathbb{G}(\mathcal{O}_S)$ in $\rm{PGL}_d(F)$ is an $S$-arithmetic cocompact lattice in $\rm{PGL}_d(F)$. 

    On the other hand, let $\mathcal{B}$ be the Euclidean building of $G=\rm{PGL}_d(F)$, $\mathcal{O}=\mathbb{F}_p[[y]]$ be the ring of integers in $F$, $K=\rm{PGL}_d(\mathcal{O})$, and  $\mathcal{B}(0)$ be the set of vertices of $\mathcal{B}$. One can identify $\mathcal{B}(0)$ with $G/K$ and assign $d$ colors $\tau:\mathcal{B}(0) \to \mathbb{Z}/d\mathbb{Z}$ by $\tau(gK)=\nu_y(\text{det } g)$ (mod $d$).
    Let $\Gamma$ be the subgroup of $\tilde{\Gamma}$ generated by the elements $\gamma \in \tilde{\Gamma}$ such that $\tau(\gamma K)-\tau(e K)=1$ (mod $d$).
    Then $\Ga$ is a finite index subgroup of $\tilde{\Ga}$ and so a cocompact lattice of $G$.
    They showed that the finite quotient $\Gamma(I) \backslash \mathbb{B}$ is a Ramanujan complex, where 
    $\Gamma(I)=\text{ker}(\Gamma \to G(\mathcal{O}_S/I))$ is the finite index congruence subgroup of $\Gamma$ for any ideal $0 \neq I \triangleleft \mathcal{O}_S$. One can obtain an infinite family $\mathscr{X}_p= \{ \Gamma(I_i)\backslash \mathcal{B}\}_i$ of Ramanujan complexes by taking infinitely many ideals $I_i\triangleleft \mathcal{O}_S$.


\begin{proof} [\textbf{Proof of Theorem \ref{3}}]
    Given $d \geq 4$ and a prime $p$, denote by $F_p$ and $\mathcal{B}_p$ the field $\mathbb{F}_p((y))$ and the Euclidean building of $\rm{PGL}_d(\mathbb{F}_p((y)))$, respectively. 
    For distinct primes $p$ and $q$, the field $F_p$ and any subfield of $F_q$ are not isomorphic since they have different characteristics. 
    By Proposition \ref{1}, there is no quasi-isometric embedding $\mathcal{B}_p \to \mathcal{B}_q$. 
    As $\mathscr{X}_p= \{\Gamma(I_i)\backslash \mathcal{B}_p\}_i$, Proposition \ref{2} implies that $\mathscr{X}_p$ does not quasi-isometrically embed in $\mathscr{X}_q$. 
\end{proof}

\begin{proof} [\textbf{Proof of Corollary \ref{coro}}]
    The geometric expanders part is obvious. For the topological expanders part, we remark that the skeleta of a building are quasi-isometric to the building by an inductive argument of Lemma \ref{skeleton}. 
    Hence if two quotient buildings of $(d-1)$-dimension do not quasi-isometrically embed into one another, then neither do their $(d-2)$-skeleta.
\end{proof}

\section{Further remarks}\label{5}

We distinguished Ramanujan complexes associated with different primes. A natural question would be the case from a single prime.

\begin{question}
Fix a prime $p$.
Consider two Ramanujan complexes 
$\{\Gamma_i \backslash \mathcal{B}_p\}_i$ and 
$\{\Lambda_i \backslash \mathcal{B}_p\}_i$, 
where $\Gamma_i$ and $\Lambda_i$ are finite index normal subgroups of the cocompact lattice $\Gamma$ of $\rm{PGL}_d(\mathbb{F}_p ((y)))$.
Under what conditions there is no
quasi-isometric embedding $\{\Gamma_i \backslash \mathcal{B}_p\}_i \to \{\Lambda_i \backslash \mathcal{B}_p\}_i$?
In particular, if $[\Gamma : \Gamma_i]$ grows much faster than $[\Gamma : \Lambda_i]$, can we say 
they do not admit quasi-isometric embeddings?
\end{question}

One may consider the case in which systoles of 
$\{\Gamma_i \backslash \mathcal{B}_p\}_i$ grow much faster than 
systoles of $\{\Lambda_i \backslash \mathcal{B}_p\}_i$. 
A possible approach consists of the following steps:
\begin{enumerate}
    \item Rephrase the finite indices $[\Gamma: \Gamma_i]$ of cocompact lattices in terms of the systoles of $\Gamma_i \backslash \mathcal{B}_p$.

    \item Show that the difference in growth rates of systoles implies that there is no quasi-isometric embedding $\{\Gamma_i \backslash \mathcal{B}_p\}_i \to \{\Lambda_i \backslash \mathcal{B}_p\}_i$.
\end{enumerate}

The first step seems plausible if one could generalize 
the following result of Belolipetsky-Weinberger for symmetric spaces and arithmetic cocompact lattices to the setting of Euclidean buildings and $S$-arithmetic lattices.

\begin{theorem} \textup{\cite{belolipetsky2024growth}} \label{bw}
Let $X$ be the symmetric space of a semisimple Lie group $G$ of rank at least $2$. Suppose that $\Gamma \backslash X$ is a compact arithmetic locally symmetric space and 
$\{\Gamma_i \backslash X \to \Gamma \backslash X\}_i$ is a sequence of congruence coverings of degree $d_i=[\Gamma: \Gamma_i]$ for $i=1,2, \dots$. Then the absolute $k$-dimensional systole of $\Gamma_i \backslash X$ grows polylogarithmically with $d_i$ for 
$1 \leq k \leq \text{max} \{1, r_1\}$, where $r_1$ is the strongly orthogonal rank of $\Gamma \backslash X$.
     
\end{theorem}

For the second step, \textit{separation profiles} introduced in \cite{benjamini2012separation} might be used. Separation profiles measure how difficult it is to disconnect the objects, and hence systoles may be related to them.
It has been shown that separation profiles are invariant under coarse embeddings and under quasi-isometric embeddings as well.

\bigskip

On the other hand, an open problem concerning expanders is whether all expanders are super-expanders. 
Since being expanders (resp. super-expanders) implies that they coarsely embed into any Hilbert space (resp. uniformly convex Banach spaces), 
the problem can be reformulated in terms of coarse embeddability into uniformly convex Banach spaces.
Hence distinguishing expanders up to coarse embeddings reduces the problem and this motivated Question\ref{question}.

By a recent result of Bensaid \cite{bensaid2022coarse}, we could observe that lower rank Ramanujan complexes do not coarsely embed into higher rank Ramanujan complexes as follows.
\begin{proposition}
    For $d \geq3$ and a prime $p$, denote by $\mathcal{B}_{d,p}$  the Euclidean building of $\mathrm{PGL}_d(\mathbb{F}_p((y)))$ and
    $\mathscr{X}_{d,p}=\{\Gamma_i \backslash \mathcal{B}_{d,p}\}_i$ a family of Ramanujan complexes.
  If $d > d'$, then there is no coarse embedding $\mathscr{X}_{d,p} \to \mathscr{X}_{d',p'}$ for any primes $p$ and $p'$.
\end{proposition}

\begin{proof}
   By \cite[Proposition 3.1]{das2018geometry}, we can replace quasi-isometric embeddings by coarse embeddings in Proposition\ref{KV2}. 
   Since $\Gamma$ and $\mathcal{B}_{d,p}$ are quasi-isometric as shown in the proof of Proposition \ref{2}, Proposition \ref{2} also holds for coarse embeddings.
   If $d>d'$, then \cite[Theorem 1.4]{bensaid2022coarse} implies that there is no coarse embedding $\mathcal{B}_{d,p} \to \mathcal{B}_{d',p'}$  and this completes the proof. 
\end{proof}

\begin{question}
    What happens in the equal-rank cases? Given two families of Ramanujan complexes of equal rank, can one distinguish them up to coarse embedding?
\end{question}

\bibliographystyle{alpha}
\bibliography{refs}

\end{document}